\documentclass[oneside,a4paper,11pt]{amsart}
\usepackage{amssymb,amsmath,t1enc,amsthm,geometry,enumitem}
\usepackage[colorlinks]{hyperref}
\usepackage[utf8]{inputenc} 
\usepackage[english]{babel} 
\usepackage{color}
\usepackage{enumitem}
\usepackage{amscd} 
\usepackage{xy}
\usepackage{xypic}
\usepackage{graphicx}

\geometry{a4paper,left=3.0cm, right=3.0cm, top=3.0cm, bottom=3.0cm,foot=1cm}

\hypersetup{linkcolor = blue,
            urlcolor  = blue,
            citecolor = green,
            anchorcolor = green}

\newcommand{\halmaz}[1]{\left\{\,#1\,\right\}}

\newcommand*{\C}{\mathcal C}
\newcommand*{\TM}{\mathcal TM}
\newcommand*{\T}{\mathcal T}

\newcommand{\R}{\mathbb R}
\newcommand*{\F}{\mathcal{F}}

\newcommand{\norm}[1]{\ensuremath{\left\lvert #1 \right\rvert}}
\newcommand{\innerp}[1]{\ensuremath{\left\langle #1 \right\rangle}}
\newcommand*{\X}[1]{{\mathfrak X}(#1)}
\newcommand*{\h}{\mathfrak h}

\def\nr{\nabla^{^R}}
\def\P{\mathcal P}

\numberwithin{equation}{section} 

\theoremstyle{plain}
\newtheorem{theorem}{Theorem}[section]
\newtheorem{proposition}[theorem]{Proposition}
\newtheorem{lemma}[theorem]{Lemma}
\newtheorem{corollary}[theorem]{Corollary}
\newtheorem{property}[theorem]{Property}

\theoremstyle{definition}
\newtheorem{definition}[theorem]{Definition}
 
\newtheorem{remark}[theorem]{Remark}

\newtheorem{example}[theorem]{Example}

\begin{document}

\title[Natural parallelism associated to navigation data]{Natural parallel
  translation and connection associated to navigation data}

\author{A.~Mezrag}
\author{Z.~Muzsnay}
\author{Cs.~Vincze}

\address{Asma Mezrag, Institute of Mathematics, University of Debrecen,
  Debrecen, Hungary} \email{asma1998mezrag@gmail.com}

\address{Zolt\'an Muzsnay, Institute of Mathematics, University of
  Debrecen, Debrecen, Hungary} 
\urladdr{https://math.unideb.hu/en/dr-zoltan-muzsnay}
\email{muzsnay@science.unideb.hu}

\address{Csaba Vincze, Institute of Mathematics, University of Debrecen,
  Debrecen, Hungary}
\urladdr{https://math.unideb.hu/dr-vincze-csaba}
\email{csvincze@science.unideb.hu}

\date{\today}

\subjclass[2020]{51D15, 53C60, 53C29, 53B40.}

\keywords{Finsler geometry, navigation data, parallel translation,
  connexion, curvature.}

\begin{abstract}
  In this paper, we consider the geometric setting of navigation data and
  introduce a natural parallel translation using the Riemannian
  parallelism. The geometry obtained in this way has some nice and natural
  features: the natural parallel translation is homogeneous (but in general
  nonlinear), preserves the Randers type Finslerian norm constituted by the
  navigation data, and the holonomy group is finite-dimensional.
\end{abstract}

\maketitle

\section{Introduction}
\label{sec:1}

The parallel translation is one of the most important geometric concept
appearing in different geometric settings, such as Riemannian geometry,
Finsler geometry, and relativity theory. It can be introduced through
different objects, such as covariant derivatives
\cite{Kobayashi_Nomizu_1996,Spivak_1979}, or different types of connections
\cite{Ehresmann_1995,Grifone_1972}. We also have an associated algebraic
structure, called holonomy group, by considering parallel translations
along loops. The holonomy group can provide important information about the
geometry of the manifold, see for example the de Rham decomposition theorem
\cite{de_rham_1952,Eschenburg_Heintze_1998}.  In some cases, the holonomy
group is a finite-dimensional group, but in the case of homogeneous
(nonlinear) parallel translations there are examples, where the holonomy
group is infinite-dimensional \cite{ Mezrag_Muzsnay_2024,
  Muzsnay_Nagy_max_2015}. It turns out that, in the case of Finsler
manifolds, the holonomy group of the the natural parallel translation is,
in general, an infinite-dimensional group
\cite{Hubicska_Matveev_Muzsnay_2021}. This result motivates the
investigation of homogeneous parallel translations where the holonomy group
is finite-dimensional. As we will see, such a geometric setting can be
associated with navigation data on a manifold.

The navigation data consists of a pair $(h,W)$, where $h$
is a Riemannian metric, and $W$ is a smooth vector field on a manifold $M$.  The Zermelo's 
navigation problem is to find the paths of shortest travel time in a
Riemannian manifold $(M, h)$, under the influence of a wind or a current
represented by a vector field $W$.  As D.~Bao, C.~Robles, and Z.~Shen
proved in \cite{Bao_Robles_Shen_2004} that the Zermelo's navigation problem is
equivalent to considering geodesics of Randers-type Finsler metrics.  The
construction of the metric structure associated to the navigation data is easy
to understand (the sets of unit vectors, called indicatrices are blown away
by the wind $W$), however, the affine structure (the parallel translation)
is not so easy or natural to understand \cite{robles_2007}. Moreover, the
holonomy group can be very large even in cases when the metric structure
is relatively simple \cite{Hubicska_Muzsnay_2020a}.

For this reason, we consider the geometric setting of navigation data and
introduce a natural parallel translation using the Riemannian
parallelism. The geometry obtained in this way has some nice and
natural features: the natural parallel translation is homogeneous (but in general nonlinear), preserves
the Randers type Finslerian norm constituted by the navigation data, and the holonomy group is finite-dimensional.

\section{Preliminaries}
\label{sec:2}

Throughout this article, $M$ denotes a connected differentiable manifold of class $C^\infty$, $\X{M}$
is the vector space of smooth vector fields on $M$.  The first and the
second tangent bundles of $M$ are denoted by $(TM,\pi ,M)$ and
$(TTM,\tau ,TM)$, respectively.  Local coordinates $(x^i)$ on $M$ induce
local coordinates $(x^i, y^i)$ on $TM$.  The vector 1-form $J$ on $ TM$,
defined locally as $ J = \frac{\partial}{\partial y^i} \otimes dx^i$, is
known as the natural almost-tangent structure of $TM$. Additionally, the
vertical vector field $\C = y^i \frac{\partial}{\partial y^i}$ on $TM$ is
called the Liouville vector field.

\subsection{Connections}

In this section we recall the differential algebraic presentation of
the connection theory introduced in \cite{Grifone_1972}, which we 
constantly use in the following.

\begin{definition}
  \label{def:connection}
  A \emph{connection} on $M$ is a tensor field of type (1-1) $\Gamma $ on
  $TM$ ( i.e. $\Gamma \in \Psi ^1(TM))$ such that $J \Gamma = J$, and
  $\Gamma J = - J$.  The connection is called \emph{homogeneous} if
  $[\mathcal C,\Gamma] = 0$, it is ${\mathcal C}^\infty$ on
  $TM \setminus \{0\}$ and ${\mathcal C}^0$ on $TM$. In particular, if
  $\Gamma $ is ${\mathcal C}^1$ on the tangent manifold $TM$ (including the
  $0$ section), then it is called \emph{linear}.
\end{definition}
If $\Gamma $ is a connection, then $\Gamma ^2 =I$ and the eigenspace
corresponding to the eigenvalue $-1$ is the vertical space. Then, at any nonzero $z\in TM$, we have the splitting
\begin{equation}
  \label{eq:T_h_v}
  T_zTM = \mathcal H_z \oplus \mathcal V_z,
\end{equation}
where the so-called \emph{horizontal space} $H_z$ is the eigenspace
corresponding to the eigenvalue $+1$. The matrix of $\Gamma$ in the basis
\begin{math}
  \bigl\{\frac {\partial}{\partial x^\alpha },\frac {\partial}{\partial
    y^\alpha } \bigl\}
\end{math}
is
\begin{equation}
  \label{eq:Gamma}
  \Gamma = 
  \left (
    \begin{array}{cl}
      \ \delta _\alpha ^\beta &\ \ 0
      \\
      -2\Gamma _\alpha ^\beta  &-\delta _\alpha ^\beta 
    \end{array}
  \right) ,
\end{equation}
where $\Gamma_\alpha^\beta = \Gamma_\alpha^\beta(x,y)$ are functions called
\emph{coefficients of the connection}. If the connection is homogeneous
(resp. linear), then the coefficients $\Gamma_\alpha^\beta(x,y)$ are homogeneous of degree one (resp.~linear) in $y$. The \emph{horizontal} and \emph{vertical} projectors are denoted by 
\begin{equation}
  \label{eq:h_v}
  \mathfrak h : = \tfrac{1}{2}(Id + \Gamma ), \qquad
  \nu  : = \tfrac{1}{2}(Id - \Gamma ),
\end{equation}
respectively. In terms of local coordinates we have:
\begin{displaymath}
  \h \Bigl(\frac{\partial}{\partial x^\alpha }\Bigl)  =
  \displaystyle \frac{\partial}{\partial x^\alpha } - \Gamma _\alpha
  ^\beta  \frac{\partial}{\partial y^\beta },
  \qquad
  \h \Bigl( \frac{\partial}{\partial y^\alpha }\Bigl)  =0 .
\end{displaymath}

\begin{definition}
  \label{def:cov_der}
  Given two manifolds $N$ and $M$, let $w \in {\frak X}(N)$ and $z\in \frak{X}_N(M)$, that is $z\colon N\to TM$ be smooth vector  fields. Using the natural isomorphism 
  \begin{math}
    \xi_z :T_z^v TM\to T_{\pi(z)}M,
  \end{math}
 the \emph{covariant derivative} of $z$ with
  respect to $w$ is defined by:
  \begin{equation}
    D_{w(x)} z = \xi_{z(x)}( \nu {\scriptstyle {\circ}}
    z_*{\scriptstyle {\circ}} w).
  \end{equation} 
\end{definition}
We have the following diagram:
\begin{equation}
  \label{diag:cov}
  \diagram 
  & TTM \rto^{\nu} \dto & T^vTM \dlto^{\scriptstyle \xi_z}
  \\
  TN \urto^{\scriptstyle z_*} \rto \dto^{\pi} & TM \dto^{\pi}
  \\
  N \uto<1ex>^{\scriptstyle w} \rto_{\pi \circ z} \urto^z
  \urto<1ex>_{\scriptstyle D_wz} & M \enddiagram
\end{equation}

\medskip

\noindent
In particular, for $N=M$ and $w, z \in {\frak X}(M)$, we have 
\begin{equation}
  D_wz= w^\alpha
  \left(\frac{\partial z^\beta }{\partial x^\alpha } + \Gamma
    ^\beta _\alpha \bigl(x, z(x) \bigr)\right)\frac{\partial}{\partial x^\beta } .
\end{equation}
If $N = [a,b]$ is an interval of $\Bbb R$, $w = \frac{d}{dt}$ and
$z : [a,b] \to TM$, $z(t) = \bigl(x(t),y(t)\bigr) $ is a vector field along
a curve $\gamma\colon [a,b] \to M$ (that is $\gamma = \pi\circ z$), we
arrive at:
\begin{equation}\label{parallelcurve}
  D_{\frac{d}{dt}}z =
  \left
    (\frac{dy^\beta }{dt} + \Gamma ^\beta _\alpha (x
    (t),y(t))\frac{dx ^\alpha }{dt}  
  \right)
  \frac {\partial}{\partial x^\beta }.
\end{equation} 

\begin{definition}
  \label{def:parallel}
  A vector field $z$ along a curve $\gamma$ is called
  \emph{parallel} if $D_{\frac{d}{dt}}z = 0$.  An
  \emph{autoparallel curve} is a curve $\gamma\colon [a,b] \to M$ such that
  $D_{\frac{d}{dt}}\gamma ' = 0.$
\end{definition}
\begin{remark}
  \label{rem:parallel}
  From Definitions \ref{def:cov_der} and \ref{def:parallel} we get that a vector field $z$ is
  parallel along a curve $\gamma$ if and only if $\nu \circ z' = 0$, that is
  $z'$ is horizontal, and a curve $\gamma $ is autoparallel if and only
  if
  \begin{math}
    \nu \circ \gamma '' = 0,
  \end{math}
  that is $\gamma ''$ is horizontal.
\end{remark}

\subsection{Spray and autoparallel curves}

A vector field $S\in \mathfrak{X}(TM)$ is called a spray if $JS = \C$ and
$[\C, S] = S$. Locally, a spray can be expressed as follows
\begin{equation}
  \label{eq:spray}
  S = y^i \frac{\partial}{\partial x^i} - 2G^i\frac{\partial}{\partial y^i},
\end{equation}
where the \emph{spray coefficients} $G^i=G^i(x,y)$ are $2$-homogeneous
functions in the fiber coordinates.

\begin{definition}
  \label{def:assoc_spray}
  Let $\Gamma $ be a homogeneous connection, and consider its horizontal projector $\h$.  The
  spray $S$ associated to the connection is defined by $S = \h {\tilde S}$,
  where ${\tilde S}$ is an arbitrary spray.
\end{definition}
Locally, the coefficients of the spray associated to the connection
\eqref{eq:Gamma} are
\begin{equation}
  \label{eq:G_Gamma}
  G^\alpha= \tfrac{1}{2}y^\beta \Gamma^\alpha_\beta,
\end{equation}
where the $\Gamma^\alpha_\beta (x,y)$ are the coefficients of the
connection.  Conversely, we have a homogeneous connection $\Gamma :=[J,S]$ associated to a spray $S$. If the spray has a local expression \eqref{eq:spray},
then the coefficients of the connection are:
\begin{displaymath}
  \Gamma^\alpha_\beta = \frac{\partial G^\alpha }{\partial y^\beta}.
\end{displaymath}

The notion of sprays allows us to speak about a system of second order
differential equations in a coordinate free way as follows: a
parametrized curve $c$ is called a \emph{path} of the spray $S$ if its
velocity field $\dot{c}$ is an integral curve of $S$, that is:
\begin{equation}
  \label{eq:S_c}
  S \circ {\dot{c}} = \ddot{c}.
\end{equation}
Using local coordinates, $c$ is a path of the spray \eqref{eq:spray} if
and only if it satisfies the system of second order differential equations:
\begin{equation}
  \label{eq:path}
  \frac{d^2x^\alpha }{dt^2} + 2G^\alpha \Bigl(x, \frac{dx}{dt} \Bigl)=0.
\end{equation}
It is easy to verify that the paths of the spray associated to the
connection $\Gamma$ are the autoparallel curves of $\Gamma$.

\medskip

\subsection{Navigation data and the associated Finsler structure}

\begin{definition}
  A \emph{Finsler manifold} of dimension $n$ is a pair $(M,F)$, where $M$
  is a differentiable manifold of dimension $n$ and $F: TM \to \R$
  satisfies: \vspace{-0pt}
  \begin{enumerate}[label=(\alph*), leftmargin=25pt, itemsep=-0pt]
  \item $F$ is smooth and strictly positive on
    $\TM:= TM \setminus \halmaz{0}$,
  \item $F$ is positively homogeneous of degree $1$ in the directional
    argument $y$,
  \item the metric tensor
    $g_{ij}= \frac{1}{2}\frac{\partial^2 E}{\partial y^i \partial y^j}$ has
    rank $n$ on $\TM$, where $E:=\frac{1}{2}F^2$ is the energy
    function. \vspace{-5pt}
 \end{enumerate}
\end{definition}
\noindent
The pair $(h,W)$ is called a navigation data on $M$, if $h$ is a Riemannian
metric and $W$ is a smooth vector field with Riemannian length
$\norm{W}^2 < 1$.  We denote by $\innerp{X, Y}=h_x(X,Y)$ and
$\norm{X}^2 = h_x(X,X)$ the inner product and the norm defined by the
Riemannian metric $h$.

\medskip

The Randers-type Finslerian norm function associated to the navigation data
$(h, W)$ is defined as
\begin{equation}
  \label{eq:randers_norm}
  \F(x,y) = \frac{\sqrt{ \innerp{y, W}^2 + \lambda \norm{y}^2 }}{\lambda} -
  \frac{\innerp{y, W}} {\lambda}
\end{equation}
where $x\in M$, $y\in T_xM$, and $\lambda$ is the smooth function
\begin{equation}
  \label{eq:randers_lambda}
  \lambda = 1-\norm{W}^2.
\end{equation}

\begin{remark}
  Let $\F$ be the Randers norm associated to the navigation data $(h,W)$.
  At any point $p\in M$ a vector $V^\circ_p$ has a unit Finsler norm if and
  only if $V^\circ_p-W_p$ has a unit Riemannian norm, that is
  \begin{equation}
    \label{eq:unit_norm}
    \F(V^\circ_p) = 1 \qquad \Leftrightarrow \qquad
    \norm{V^\circ_p-W_p} = 1. 
  \end{equation}
\end{remark}

\section{Natural palallel translation associated to navigation data}
\label{sec:3}

\begin{definition}
  \label{def:nat_parallel}
  Let $c$ be a curve from $p$ to $q$, and suppose that $V^\circ_p$ a unit
  vector with respect to the Randers norm function \eqref{eq:randers_norm}
  associated to the navigation data $(h, W)$. We define the \emph{natural
    parallel translation} associated to the navigation data $(h, W)$ of
  $V^\circ_p$ along $c$ as
  \begin{equation}
    \label{eq:natural_parallel}
    \P (V^\circ_p):= \P_{Riemann}(V^\circ_p-W_p)+W_q,
  \end{equation}
  where $\P_{Riemann}$ is the Riemannian parallel translation along $c$. We 
	extend the definition to any non-zero vector by using the homogeneity
  property:
  \begin{equation}
    \label{eq:natural_parallel_full}
    \P(V_p):= \F(V_p) \cdot \P \left( \tfrac{1}{\F(V_p)} V_p \right)
  \end{equation}
\end{definition}
In order to simplify the notation, we will use $\P_{R}$ instead of
$\P_{Riemann}$ in the sequel.  From Definition \ref{def:nat_parallel} we
have immediately the following

\begin{property}
  The natural parallel translation \eqref{eq:natural_parallel} is
  homogeneous but, in general nonlinear, and preserves the Randers norm
  function $\F$ associated to the navigation data $(h,W)$.
\end{property}

\begin{proof}
  The homogeneity of the parallel translation $\P$ is guaranteed from the
  construction, however it fails to be additive, that is
  $\P(U_p+V_p) \neq \P(U_p) + \P(V_p)$. Therefore $\P$ is nonlinear in
  general.  On the other hand, let $c$ be a curve from $p$ to $q$, and
  $V^\circ_p$ be a unit vector with respect to the Randers norm function
  \eqref{eq:randers_norm}. Using the relation \eqref{eq:unit_norm} between
  the Finslerian and Riemannian norms, and the fact that the Riemannian
  parallel translation preserves the Riemannian norm, we get
  \begin{equation}
    1 = \F(V^\circ_p)= \norm{V^\circ_p\!-\!W_p}  = 
    \norm{\P_R (V^\circ_p \! - \! W_p)}  =  \F 
    \bigl((\P_R (V^\circ_p \! - \! W_p)+W_q\bigr)  = \F
    \bigl(\P(V^\circ_p)\bigr)
  \end{equation}
  showing that $\P$ preserves the Finslerian norm of unit vectors.  Using
  the homogeneity property of the parallel translation $\P$ and the norm
  function $\F$, we can get that the Finslerian norm of any tangent vector
  is preserved under the natural parallel translation.
\end{proof}
We can observe that for a given curve $c$ joining the points $p$ and $q$,
the value of the parallel translation depends only on the value of $W$ at
$p$ and $q$ but does not depend on the value of $W$ on $c$ between the
endpoints.

\begin{property}
  \label{prop:W_parallel}
  Let $(h, W)$ be a navigation data on the manifold $M$. If $W$ is parallel
  along the curve $c$ with respect to the Riemannian metric, then the
  natural and the Riemannian parallel transports on $c$ coincide.
\end{property}

\begin{proof}
  Indeed, let $c$ be a curve from $p$ to $q$ and suppose that $W$ to be
  parallel with respect to the Riemannian metric $h$ along the curve $c$.
  Then $\P_{R}(W_p)=W_q$.  Let $V^\circ_p \in T_pM$ be a Finslerian unit vector
  at $p$.  Using the linearity of the Riemannian parallel translation
  $\P_{R}$:
  \begin{equation}
    \label{eq:natural_parallel_ct}
    \begin{aligned}
      \P (V^\circ_p) = \P_{R}(V^\circ_p-W_p) +W_q
      = \P_{R}(V^\circ_p) -\P_{R}(W_p)+W_q
      = \P_{R}(V^\circ_p),
    \end{aligned}
  \end{equation}
  showing that $\P$ and $\P_R$ coincide on Finslerian unit vectors. The
  statement follows from the homogeneity property of the parallel
  translations $\P$ and $\P_R$.
\end{proof}

\begin{proposition}
  Let $(h, W)$ be navigation data on the manifold $M$.  The holonomy
  group $\mathcal Hol(\P)$ associated to the natural parallel translation
  is isomorphic to the Riemannian holonomy group $\mathcal Hol
  (\P_{R})$. In particular, the holonomy group of $\mathcal Hol (\P)$ is
  finite dimensional.
\end{proposition}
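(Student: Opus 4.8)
The plan is to fix a base point $p\in M$ and to compute the natural holonomy transformations explicitly in terms of the Riemannian ones. For a loop $c$ based at $p$ we have $q=p$, hence $W_q=W_p$; writing $A:=\P_{R}$ for the (linear, $h$-orthogonal) Riemannian parallel translation along $c$, formula \eqref{eq:natural_parallel} gives, for a Finslerian unit vector $V^\circ$,
\begin{equation*}
  \P(V^\circ)=A(V^\circ-W_p)+W_p=A(V^\circ)+(\mathrm{Id}-A)(W_p),
\end{equation*}
and extending by the homogeneity relation \eqref{eq:natural_parallel_full} yields, for every nonzero $V\in T_pM$,
\begin{equation}\label{eq:holformula}
  \P(V)=A(V)+\F(V)\,(\mathrm{Id}-A)(W_p).
\end{equation}
Thus each element of $\mathcal Hol(\P)$ is an (in general nonlinear) map of the above form, completely determined by its linear part $A\in\mathcal Hol(\P_{R})$ together with the fixed data $W_p$ and $\F$.

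Next I would record the compatibility of $\P$ with concatenation of loops. Since $\P$ preserves the Finslerian unit sphere (as already proved), the image of a unit vector under $\P$ along $c_1$ is again a unit vector, to which $\P$ along $c_2$ may be applied; using the linearity of $\P_{R}$ and the concatenation property $\P_{R,c_2\cdot c_1}=\P_{R,c_2}\circ\P_{R,c_1}$ of the Riemannian translation, one checks on unit vectors, and then by homogeneity on all vectors, that $\P_{c_2\cdot c_1}=\P_{c_2}\circ\P_{c_1}$. Consequently the assignment
\begin{equation*}
  \Phi:\mathcal Hol(\P_{R})\to\mathcal Hol(\P),\qquad A\longmapsto \P_c\ \text{ with } A=\P_{R,c},
\end{equation*}
is well defined (by \eqref{eq:holformula} the map $\P_c$ depends on $c$ only through $A$), surjective (by the very definition of $\mathcal Hol(\P)$), and a group homomorphism (by the two concatenation identities).

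It remains to prove injectivity, which is the heart of the matter and the only place where the nonlinearity of $\F$ enters. Suppose $\Phi(A_1)=\Phi(A_2)$. Subtracting the two expressions \eqref{eq:holformula} and setting $B:=A_1-A_2$ (a linear endomorphism of $T_pM$), one obtains $B(V)=\F(V)\,B(W_p)$ for all nonzero $V$. If $B(W_p)\neq0$, then the right-hand side is $\F(V)$ times the fixed nonzero vector $B(W_p)$, hence a positively homogeneous but genuinely nonlinear function of $V$, which contradicts the linearity of $V\mapsto B(V)$; therefore $B(W_p)=0$, and then the identity forces $B\equiv0$, i.e. $A_1=A_2$. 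Hence $\Phi$ is a group isomorphism. Finally, since the Riemannian holonomy group $\mathcal Hol(\P_{R})$ is a Lie subgroup of the orthogonal group $O(T_pM)$ and thus finite-dimensional, the isomorphism $\Phi$ shows that $\mathcal Hol(\P)$ is finite-dimensional as well. The main obstacle is precisely this injectivity step: one must ensure that distinct Riemannian holonomy maps cannot produce the same nonlinear transformation, and this is guaranteed exactly because the Randers norm $\F$ fails to be linear, so it cannot be absorbed into the linear endomorphism $B$ unless the translational part $(\mathrm{Id}-A)(W_p)$ already agrees.
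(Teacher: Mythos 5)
Your proof is correct and takes essentially the same approach as the paper: both rest on the explicit formula $\P(V_p)=A(V_p)+\F(V_p)\,(\mathrm{Id}-A)(W_p)$ expressing each natural holonomy element through its Riemannian counterpart $A=\P_{R}$, together with the check that this correspondence respects composition (your concatenation argument is the paper's $\psi\circ\varphi$ computation in loop language). The only minor difference is the injectivity step: you force the difference $B=A_1-A_2$ to vanish by exploiting the nonlinearity of $\F$, whereas the paper inverts the formula explicitly by substituting $V_p=W_p$ and using $\F(W_p)<1$; both arguments are sound.
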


\begin{proof}
  Taking $p=q$, a straightforward computation shows the one-to-one
  correspondence $\varphi \leftrightarrow \varphi_R$ between the elements
  of the holonomy groups, where
  \begin{displaymath}
    \varphi(V_p)=\varphi_R(V_p)-\mathcal{F}(V_p)\left(\varphi_R(W_p)
      -W_p\right).
  \end{displaymath}
  Substituting $V_p=W_p$,
  \begin{displaymath}
    \varphi_R(W_p)=\frac{\varphi(W_p)-\mathcal{F}(W_p)W_p}{1-\mathcal{F}(W_p)}
  \end{displaymath}
  and the inverse formula is
  \begin{displaymath}
    \varphi_R(V_p)=\varphi(V_p)+\mathcal{F}(V_p)\left( \frac{ \varphi(W_p)
        - \mathcal{F}(W_p) W_p}{1 - \mathcal{F}(W_p)}-W_p \right)
    =\varphi(V_p)+\mathcal{F}(V_p)\frac{\varphi(W_p)-W_p}{1-\mathcal{F}(W_p)}.
  \end{displaymath}
  Finally,
  \begin{alignat*}{1}
    \psi\circ \varphi (V_p)
    & = \psi_R(\varphi(V_p)) - \mathcal{F}(\varphi(V_p))\left(\psi_R(W_p)
      - W_p\right)
    \\
    & = \psi_R\circ \varphi_R(V_p)-\mathcal{F}(V_p) \left(\psi_R\circ
      \varphi_R(W_p)-\psi_R(W_p)\right)-\mathcal{F}(\varphi(V_p))
      \left(\psi_R(W_p)-W_p\right)
    \\
    & = \psi_R\circ \varphi_R(V_p)-\mathcal{F}(V_p)\left(\psi_R\circ
      \varphi_R(W_p)-W_p\right)
  \end{alignat*}
because of $\mathcal{F}(\varphi(V_p))=\mathcal{F}(V_p)$.

\end{proof}

\begin{remark}
  As shown in \cite{Hubicska_Matveev_Muzsnay_2021}, the holonomy groups of
  homogeneous parallel translations arising from a Finsler metric are
  generally infinite-dimensional, with finite-dimensional holonomy groups
  appearing only in special cases.  This makes it particularly interesting
  to identify homogeneous (nonlinear) parallel translations with
  \emph{finite-dimensional holonomy groups}. As demonstrated by the
  previous proposition, natural parallel translations possess this
  distinctive property.
\end{remark}

\bigskip

\section{Geometric quantities associated to the natural parallelism}
\label{sec:4}

As presented in Section \ref{sec:2}, the notion of connection, covariant
differentiation and splitting \eqref{eq:T_h_v} are geometric structures
related to the parallelism.  In this section we derive these geometric
objects associated to the natural parallel translation with respect to the
navigation data $(h, W)$.

\subsection{Horizontal distribution, connection}
\label{subsec:hordistr}

As Remark \ref{rem:parallel} shows, parallel translations correspond to
traveling along horizontal curves. Therefore the horizontal distribution
associated to a parallelism can be obtained by differentiation of parallel
vector fields: the possible tangent directions are horizontal.  We can use
this property to determine the horizontal distribution.

Let $A^k_{ij}=A^k_{ij}(x)$ be the connection coefficients of the
Lévi-Civita connection $\nabla^R$ of the Riemann metric $h$:
\begin{equation}
  \label{eq:riemann_horiz}
  \nabla^{^R}_{\! \frac{\partial}{\partial x^i}}
  {\tfrac{\partial}{\partial x^j}}
  =  A^k_{ij} {\tfrac{\partial}{\partial x^k}}.
\end{equation}
Consider a curve $c_t$ on $M$ and let $\P_{t}$ be the natural parallel
translation along $c_t$, and $(\P_R)_{t}$ be the Riemann parallel translation
with respect to the metric $h$ along the curve $c_t$. Then, from formula
\eqref{eq:natural_parallel} we get that for any Finslerian unit vector
$V_\mathrm{o } \in T_{c_o} M$ the corresponding parallel vector field along
$c_t$ is
\begin{equation}
  \label{eq:parallel_field}
  \P_t(V_0)  = (\P_R)_t(V_0- W_0)+ W(c(t)),
\end{equation}
and its derivative must be horizontal:
\begin{equation}
  \label{eq:parallel_field_der}
  \frac{d}{dt}  \P_t(V_0)  \in \mathcal H_{(c_t, \P_t(V_0))}.
\end{equation}
If the coordinate expression of the parallel vector field
\eqref{eq:parallel_field} in $(x^k, y^k)$ is
$(c_t^k, \P_t (V_0)^k)$, then its derivative is
\begin{equation}
  (c_t^k, \P_t(V_0)^k, \dot{c}_t^k, \tfrac{d}{dt}[\P_t(V_0)]^k), 
\end{equation}
where
\begin{equation}
  \label{eq:deriv_Pt}
    \frac{d}{dt} [\P_t(V_0)]^k
    \! = \! \frac{d}{dt}
    \bigl[(\P_R)_t(V_0 \! - \!W_0)+ W(c_t)\bigr]^k
    \! = \! - A^k_{ij} \, \dot{c}_t^i \, (\P_R)^j_t(V_0- W_0) +
    \frac{\partial W^k}{\partial x^i}\dot{c}_t^i.
\end{equation}
Denoting the connection coefficients of the natural parallelism with
$\Gamma^k_{i}=\Gamma^k_{i}(x,y)$, the horizontal projector associated to
the navigation data is
\begin{equation}
  \label{eq:horiz_proj}
  \h \colon TTM \rightarrow \mathcal H \quad (\subset TTM),
\end{equation}
and the horizontal distribution is generated by the vector fields
\begin{equation}
  \label{eq:natural_horiz}
  \h \Bigl(\frac{\partial}{\partial x^i }\Bigl)  =
  \displaystyle \frac{\partial}{\partial x^i }
  - \Gamma_i^j  \frac{\partial}{\partial y^j }.
\end{equation}
Comparing \eqref{eq:parallel_field_der}, \eqref{eq:deriv_Pt}, and
\eqref{eq:natural_horiz} we get 
\begin{equation}
  \label{eq:Gamma_1}
  \Gamma^k_i(c_t, \P_t(V_0)) \dot{c}_t^i   
  =  A^k_{ij}(c_t) \cdot \dot{c}_t^i \cdot [(\P_R)_t(V_0- W_0)]^j -
  \dot{c}^i_t  \frac{\partial W^k}{\partial x^i}\Big|_{c(t)}.
\end{equation}
The formula holds for any curve $c$ and any parameter $t$, so for any point
$c_t$, and direction represented by the unit vector $\dot{c}_o \in T_xM$. We
also note that for $t=0$ we have $\P_o=(\P_R)_o=id$ the identity
transformation. Therefore at $x=c_o$ we get
\begin{equation}
  \label{eq:Gamma_2}
  \Gamma^k_i(x, V_0)= A^k_{ij}(x)  (V_0^j- W^j(x))
  - \frac{\partial W^k}{\partial x^i}\Big|_x
\end{equation}
for any Finslerian unit vector $V_0$. Moreover, we can extend it by the
homogeneity property to any, not necessarily unit vector: for any
$y\in \T_xM$ we have a Finslerian unit vector $V_0=\frac{1}{\F(x,y)}y$. Therefore
\begin{equation}
  \label{eq:Gamma_3}
  \Gamma^k_i(x, \tfrac{y}{\F(x,y)})=  A^k_{ij}(x) \cdot(\tfrac{1}{\F(x,y)}y^j
  -W^j(x)) -\frac{\partial W^k}{\partial x^i}\Big|_{x},
\end{equation}
or equivalently
\begin{equation}
  \label{eq:Gamma_4}
  \Gamma^k_i(x, y)=  A^k_{ij}(x) \cdot(y^j- \F(x,y) W^j(x))
  - \F(x, y) \frac{\partial W^k}{\partial x^i}\Big|_{x}.
\end{equation}
Using the simplified notation $A^k_{ij}=A^k_{ij}(x)$ and $\F(y) =\F(x,y)$,
we have
\begin{equation}
  \label{eq:Gamma_6}
  \Gamma^k_i(x,y) =  A^k_{is} \cdot y^s - A^k_{is} \cdot \F(y)\cdot  W^s
  - \F(y)\cdot \frac{\partial W^k}{\partial x^i}.
\end{equation}
The horizontal distribution can be described as the distribution spanned by
the horizontal lifted vector fields: the horizontal lift
$l_{(x,y)}:T_xM \to \mathcal H_{(x,y)}$ of $X \in T_xM$ is
\begin{equation}
  \label{eq:horizont}
  l_{(x,y)}(X)=l_{(x,y)}^{R} (X) +\F (x, y) \cdot \left(\nr_X W\right)^v,
\end{equation}
where $l_{(x,y)}^{R}$ is the Riemannian horizontal lift, and $\nr_X W$ is
the vertical lift of the Riemannian covariant derivative of the wind $W$.
\begin{definition}
  \label{def:natural_connection}
  Let $(h, W)$ be a navigation data on the manifold $M$, and $\h$ be the
  horizontal projector \eqref{eq:horiz_proj} associated to the natural
  parallelism.  The connection
  \begin{equation}
    \label{eq:natural_connection}
    \Gamma:=2\h - \mathrm{Id}.
  \end{equation}
  is called the \emph{natural connection}. The coefficients of the natural
  connection are the functions given in \eqref{eq:Gamma_6}.
\end{definition}

\bigskip

\subsection{Covariant derivative associated with natural paralellism}
\label{subsec:covder}

A vector field $V_t$ along the curve $c_t$ is parallel if its derivative is
the horizontal lift of $\dot{c}(t)$ along $V_t$:
\begin{equation}
  \label{eq:lift_V_t}
  \dot{V}_t = l_{(c,V_t)} (\dot{c}),
\end{equation}
or equivalently, if its covariant derivative is vanishing. From
\eqref{parallelcurve} one can get the formula for the covariant derivative
\begin{equation}
  \label{eq:nabla_vt}
  \frac{\nabla V_t}{dt} =  \Bigl( \frac{d V^k_t}{dt}
  + \Gamma^k_i(c,V)\dot{c}^i\Bigr) \frac{\partial}{\partial x^k}. 
\end{equation}
Formula \eqref{eq:nabla_vt} allows us to introduce the (nonlinear)
covariant derivative of a vector field $V$ with respect to $X$:
\begin{equation}
  \nabla_X V =   X^i \Bigl(\frac{\partial V^k}{\partial
    x^i} + \Gamma^k_i(x,V) \Bigr) \frac{\partial}{\partial x^k}.
\end{equation}
The map 
\begin{equation}
  \label{eq:nabla}
  \nabla : \X{M} \times \X{M} \to \X{M} \qquad  (X, Y) \to \nabla_XY
\end{equation}
is $C^\infty(M)$ linear in its first variable, but only $\R$ homogeneous in
the second variable.  Indeed, $\nabla$ does not satisfy the additivity
property in the second variable. The parallelism can be formulated in terms
of the (nonlinear) covariant derivative as follows.
\begin{property}{}
  The vector field $V(t)$ along the curve $c$ is parallel if and only if
  $\nabla_{\dot{c}}V=0$.
\end{property}

\begin{property}
  The coordinate invariant expression of the non-linear covariant
  differentiation \eqref{eq:nabla} associated to the nonlinear parallelism
  is
  \begin{equation}
    \label{eq:covariant_natural}
    \nabla_XY = \nr_XY - \F(Y) \! \cdot \! \nr_X W,
  \end{equation}
  where $\nr$ is the Levi-Civita connection of the Riemannian metric $h$.
\end{property}

\begin{remark}
  Formula \eqref{eq:covariant_natural} shows that if the wind $W$ is
  parallel along a curve, then the Riemannian parallel translation $\P_R$
  and the natural parallel translation $\P$ along $c$ coincide. Moreover,
  $W$ is parallel with respect to the Riemannian metric if and only if
  $ \nabla \equiv \nr \!$.
\end{remark}

Investigating the natural parallelism and the associated covariant
differentiation we get the following proposition.

\begin{proposition}
  The covariant differentiations along the integral curves of the vector
  field $W$ coincide if and only if the integral curves of $W$ are
  Riemannian geodesics.
\end{proposition}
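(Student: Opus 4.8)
The plan is to read the statement directly off the coordinate-invariant formula \eqref{eq:covariant_natural}, namely $\nabla_X Y = \nr_X Y - \F(Y)\cdot\nr_X W$. Specializing the differentiation direction to $W$ itself, differentiation along the integral curves of $W$ is governed by
\begin{equation*}
  \nabla_W Y = \nr_W Y - \F(Y)\cdot \nr_W W,
\end{equation*}
so the natural and the Riemannian covariant differentiations agree along these curves exactly when the correction term $\F(Y)\cdot\nr_W W$ vanishes. The whole argument then reduces to an analysis of when this term is zero.

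For the ``if'' direction I would start from the geodesic hypothesis. An integral curve $c$ of $W$ satisfies $\dot c = W\circ c$, hence $\nr_{\dot c}\dot c = (\nr_W W)\circ c$, so the condition that every integral curve be a Riemannian geodesic is equivalent to $\nr_W W = 0$ on $M$ (through each point passes an integral curve). Substituting $\nr_W W = 0$ into the display removes the correction term and gives $\nabla_W Y = \nr_W Y$ for every $Y$, i.e.\ the two differentiations coincide along the integral curves of $W$.

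For the converse I would assume $\nabla_W Y = \nr_W Y$ for all vector fields $Y$, which by the display forces $\F(Y)\cdot \nr_W W = 0$ for every $Y$. Choosing any $Y \ne 0$, the positivity of the Randers norm on $\TM$ gives $\F(Y) > 0$, hence $\nr_W W = 0$; reversing the previous computation shows the integral curves of $W$ are geodesics.

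I do not expect a genuine obstacle here: the single point of content is recognizing that $\nr_W W = 0$ is precisely the geodesic equation for the integral curves of $W$, the remaining steps being the quantifier over $Y$, which is dispatched by the positivity of $\F$. The only place where slightly more care is needed is if one reads ``coincide'' as the single identity $\nabla_{\dot c}\dot c = \nr_{\dot c}\dot c$ obtained from the velocity field itself; then one must check $\F(W) \ne 1$. A direct evaluation of \eqref{eq:randers_norm} at $y = W$ gives $\F(W) = \norm{W}/(1+\norm{W}) < 1$ because $\norm{W} < 1$, so the coefficient $1-\F(W)$ in $\nabla_W W = (1-\F(W))\,\nr_W W$ is nonzero and the equivalence with $\nr_W W = 0$ is unaffected.
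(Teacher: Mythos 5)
Your proof is correct and follows essentially the same route as the paper's: the paper's entire argument is to substitute $X=W$ into \eqref{eq:covariant_natural} and observe that the two differentiations coincide if and only if $\nabla^R_{\dot c}W=0$, i.e.\ the integral curves of $W$ are Riemannian geodesics. Your additional checks — the quantifier over $Y$ via positivity of $\F$, and the computation $\F(W)=\norm{W}/(1+\norm{W})<1$ for the velocity-field reading — are sound and in fact mirror the paper's formula \eqref{eq:F_W} used in the subsequent proposition.
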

\begin{proof}
  Substituting $X=W$ in formula \eqref{eq:covariant_natural} we have the
  same covariant derivatives along the integral curves of the vector field
  $W$ if and only if $\nabla^R_{\dot{c}} W=0$, that is the integral curves
  of $W$ are Riemannian geodesics.
\end{proof}

\begin{proposition}
  The integral curves of $W$ are pre-geodesics (resp.~geodesics) of
  $\nabla$ if and only if they are pre-geodesics (resp.~geodesics) of
  $\nr$.
\end{proposition}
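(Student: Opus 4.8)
The plan is to reduce everything to the invariant formula \eqref{eq:covariant_natural}, namely $\nabla_X Y = \nr_X Y - \F(Y)\,\nr_X W$, evaluated along an integral curve of $W$. Along such a curve $c$ one has $\dot c = W\circ c$, so substituting $X = Y = W$ expresses the $\nabla$-acceleration in terms of the Riemannian one:
\begin{equation*}
  \nabla_{\dot c}\dot c = \nabla_W W = \nr_W W - \F(W)\,\nr_W W = \bigl(1 - \F(W)\bigr)\,\nr_{\dot c}\dot c .
\end{equation*}
Thus the two accelerations are pointwise proportional, and the whole statement reduces to understanding the scalar factor $1 - \F(W)$.

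The key computation is therefore to evaluate $\F(W)$. Putting $y = W$ in \eqref{eq:randers_norm} and using $\innerp{W,W} = \norm{W}^2$ together with $\lambda = 1 - \norm{W}^2$, the radicand collapses, since $\norm{W}^4 + \lambda\,\norm{W}^2 = \norm{W}^2$; this gives
\begin{equation*}
  \F(W) = \frac{\norm{W} - \norm{W}^2}{1 - \norm{W}^2} = \frac{\norm{W}}{1 + \norm{W}}, \qquad\text{so that}\qquad 1 - \F(W) = \frac{1}{1 + \norm{W}} > 0 .
\end{equation*}
The proportionality factor is therefore a strictly positive function along $c$ (and for this conclusion one does not even need the navigation-data bound $\norm{W}^2 < 1$, only that $\F(W)\neq 1$).

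Positivity then yields both equivalences at once. For geodesics, $\nabla_{\dot c}\dot c = 0$ holds if and only if $\nr_{\dot c}\dot c = 0$, since the factor never vanishes, so $c$ is a $\nabla$-geodesic exactly when it is a $\nr$-geodesic. For pre-geodesics I would invoke the characterization that a curve is a pre-geodesic of a positively homogeneous connection precisely when its acceleration is pointwise proportional to its velocity; as $\nabla_{\dot c}\dot c$ and $\nr_{\dot c}\dot c$ differ only by the positive scalar function $1 - \F(W)$, the relation $\nabla_{\dot c}\dot c = f\,\dot c$ for some $f$ holds if and only if $\nr_{\dot c}\dot c = g\,\dot c$ for some $g$ (with $g = (1+\norm{W})f$), so the two pre-geodesic properties coincide.

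The point I expect to require the most care is justifying that pre-geodesic characterization for the \emph{nonlinear} connection $\nabla$. In contrast with the linear setting, one must verify that the reparametrization argument uses only the positive $1$-homogeneity of the coefficients $\Gamma^k_i(x,y)$ in $y$: for an orientation-preserving reparametrization $\tilde c(s) = c(t(s))$ a direct computation gives $\nabla_{\dot{\tilde c}}\dot{\tilde c} = (t')^2\,\nabla_{\dot c}\dot c + t''\,\dot c$, so that $\nabla_{\dot c}\dot c\parallel\dot c$ is exactly the condition allowing a reparametrization to a $\nabla$-geodesic. Once this is in place, the positive factor $1-\F(W)$ transfers the proportionality between $\nabla$ and $\nr$ without disturbing it, completing the argument.
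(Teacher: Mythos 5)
Your proof is correct and follows essentially the same route as the paper: substituting $X=Y=W$ into \eqref{eq:covariant_natural} to get $\nabla_W W = \bigl(1-\F(W)\bigr)\nr_W W$, computing $\F(W)=\norm{W}/(1+\norm{W})<1$, and transferring proportionality of acceleration to velocity through the positive factor. Your extra verification of the pre-geodesic characterization for the nonlinear connection via the reparametrization identity $\nabla_{\dot{\tilde c}}\dot{\tilde c} = (t')^2\,\nabla_{\dot c}\dot c + t''\,\dot c$ is a sound addition that the paper leaves implicit.
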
 
\begin{proof}
  Substituting $X=Y=W$ in formula \eqref{eq:covariant_natural} we have that
  \begin{equation}
    \label{eq:x_y_w}
    \nabla_W W=\left(1-\F(W)\right)\nabla^R_W W.
  \end{equation}
  Since $W$ is one of the navigation data, it follows that
  \begin{equation}
    \label{eq:F_W}
    \F(W)=\frac{|W|}{1+|W|}< 1,
  \end{equation}
  and the acceleration vector fields $\nabla_{\dot{c}}W$ and
  $\nabla^R_{\dot{c}} W$ are proportional to $\dot{c}=W\circ c$ at the same
  time as follows:
  \begin{equation}
    \label{eq:pregeod}
    \nabla_{\dot{c}} W (t)=\varphi(t) \dot{c}(t) \qquad \Leftrightarrow
    \qquad \nabla^R_{\dot{c}} W (t)=\rho(t) \dot{c}(t),
  \end{equation}
  with
  \begin{equation}
    \label{eq:rho_varphi}
    \rho(t)=\frac{\varphi(t)}{1-\F (\dot{c}(t))}.
  \end{equation}
  As \eqref{eq:pregeod} shows, the integral curves of $W$ are pre-geodesics of
  the natural connection $\nabla$ if and only if they are pre-geodesics of the
  connection $\nabla^R$ and, in case of $\varphi(t)=\rho(t)=0$, the integral
  curves of $W$ are geodesics of $\nabla$, if and only if they are geodesics of
  $\nabla^R$.

\end{proof}

\subsection{Torsion}

In his paper \cite{Grifone_1972} J.~Grifone developed the connection
theory in terms of the Frölichher-Nijenhuis calculus. He introduced the
notion of weak, and strong torsions of a connection denoted by $t$ and $T$,
respectively. If the connection is homogeneous, then $t=0$ if and only if
$T=0$ (see \cite[Corollaire 1.56]{Grifone_1972}).  Since in the case of
natural parallelism the associated connection is homogeneous, the strong
torsion is vanishing if and only if the weak torsion is zero, and we call
simply by torsion the vector valued 2-form defined by
\begin{equation}
  \label{eq:def_torsion}
  t=\frac{1}{2}[J, \Gamma].
\end{equation}
Its components are given by
\begin{equation}
  \label{eq:torsion_1}
  t^k_{ij}=\frac{\partial \Gamma^k_j}{\partial y^i}
  -\frac{\partial \Gamma^k_i}{\partial y^j}.
\end{equation}
The torsion of the natural connection is nonzero in general. Indeed, the
derivatives of the connection coefficients with respect to vertical
directions are
\begin{equation}
  \label{eq:torsion_2}
  \frac{\partial \Gamma^k_i}{\partial y^j}(x, y)= A^k_{ij} 
  -\F_{y^j} A^k_{is} W^s - \F_{y^j} \frac{\partial W^k}{\partial x^i},
\end{equation}
where $\F_{y^i}:=\partial_{y^i} \F$.  It follows that the torsion
\begin{equation}
  \label{eq:torsion_3}
  t^k_{ij}= (\F_{y^i} A^k_{js}-\F_{y^j} A^k_{is}) W^s
  +  \F_{y^i} \frac{\partial W^k}{\partial x^j}
  - \F_{y^j} \frac{\partial W^k}{\partial x^i},
\end{equation}
or in a coordinate-free way
\begin{equation}
  \label{eq:torsion}
  t =  d_J\F \wedge \nr W,
\end{equation}
where the right side involves the simplified notation of the vertical
lifted vector valued 1-form
\begin{equation}
  \bigl[ \nr W \bigr] (X) := (\nr_X W)^v, 
\end{equation}
and the wedge product gives a vector valued 2-form
defined as
\begin{equation}
  \bigl[d_J\F \wedge \nr W \bigr] (X,Y)=d_J \F(X)
  \cdot \nr W (Y) -d_J \F(Y) \cdot  \nr W (X),
\end{equation}
or equivalently
\begin{equation}
  [d_J\F \wedge \nr W] (X,Y) =JX(F) \cdot (\nr_Y W)^v
  -JY(F)\cdot (\nr_X W)^v.  
\end{equation}
From \eqref{eq:torsion} it follows that
\begin{equation}
  \label{eq:torsion_cond}
  t  \equiv 0 \quad \Longleftrightarrow \quad d_J\F \wedge \nr W = 0.
\end{equation}
The geometric characterization of \eqref{eq:torsion_cond}, the vanishing of
the torsion, is given by Proposition \ref{prop:t=0_W} and Theorem
\ref{thm:t=0_W_randers} below.

\begin{proposition}
  \label{prop:t=0_W}
  The  torsion of the natural connection is identically zero if and
  only if $W$ is parallel with respect to the Riemannian metric, that
  is $\nr W=0$.
\end{proposition}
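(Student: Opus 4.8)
The plan is to prove the equivalence using the characterization \eqref{eq:torsion_cond} together with the explicit components \eqref{eq:torsion_3}. The implication $\nr W=0\Rightarrow t\equiv 0$ is immediate from \eqref{eq:torsion}, so the whole content lies in the converse. First I would rewrite \eqref{eq:torsion_3} in the factored form
\[
t^k_{ij}=\F_{y^i}\,(\nr_{\partial_j}W)^k-\F_{y^j}\,(\nr_{\partial_i}W)^k,
\]
where $(\nr_{\partial_i}W)^k=\partial W^k/\partial x^i+A^k_{is}W^s$ are functions of $x$ alone. Thus $t\equiv 0$ amounts to the family of scalar identities $\F_{y^i}\,(\nr_{\partial_j}W)^k=\F_{y^j}\,(\nr_{\partial_i}W)^k$, holding for all indices $i,j,k$ and every $(x,y)$ with $y\neq 0$.

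Next, fixing $x$ and $k$ and contracting these identities with $y^i$ (summing over $i$), Euler's relation $y^i\F_{y^i}=\F$ from the $1$-homogeneity of $\F$ turns them into $\F\cdot(\nr_{\partial_j}W)^k=C^k\,\F_{y^j}$, where $C^k:=y^i\,(\nr_{\partial_i}W)^k$ is linear (homogeneous of degree one) in $y$. Equivalently, $(\nr_{\partial_j}W)^k=(C^k/\F)\,\F_{y^j}$ for every $j$. Since the left-hand side is independent of $y$, I would differentiate the right-hand side with respect to $y^l$ and contract with $y^j$, using $y^j\F_{y^j}=\F$ together with $y^j\F_{y^jy^l}=0$ (where $\F_{y^jy^l}:=\partial^2\F/\partial y^j\partial y^l$, and the second relation reflects the $0$-homogeneity of $\F_{y^l}$). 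This yields $\F\cdot\partial_{y^l}(C^k/\F)=0$, hence $\partial_{y^l}(C^k/\F)=0$ for all $l$; so the ratio $C^k/\F=\phi^k(x)$ is independent of $y$, i.e.\ $C^k=\phi^k(x)\,\F$.

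The decisive and hardest step is the conclusion drawn from $C^k=\phi^k(x)\,\F$. Here $C^k$ is linear in $y$, while $\F$ is a genuine Finsler norm, so this equality is impossible unless $\phi^k=0$; otherwise $\F$ would be linear in $y$, contradicting that its metric tensor $g_{ij}$ has rank $n$. Therefore $\phi^k\equiv 0$, whence $C^k=y^i\,(\nr_{\partial_i}W)^k\equiv 0$, and finally $(\nr_{\partial_i}W)^k=0$ for all $i,k$, that is $\nr W=0$. This is exactly where the nonlinearity (strict convexity) of the Randers norm $\F$ enters in an essential way: equivalently, one uses that the vectors with components $\bigl(\F_{y^1}(x,y),\dots,\F_{y^n}(x,y)\bigr)$ are not all proportional to a single fixed vector as $y$ ranges over $T_xM\setminus\{0\}$, so that a $y$-independent covector proportional to every one of them must vanish. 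All the intermediate identities are routine homogeneity computations; the only genuine obstacle is isolating and justifying this final use of the Finslerian (as opposed to merely degenerate or linear) nature of $\F$.
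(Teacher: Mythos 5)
Your proof is correct and follows essentially the same route as the paper: the paper packages your contraction-and-differentiation argument as its Lemma \ref{torsionlemma} (applied with $\rho(X)=\nr_X W$), starting from the same factored form of the torsion and the same Euler-relation contraction with $y^i$. The only difference is cosmetic and lies in the last step — the paper concludes from the nonvanishing of the angular metric tensor $\F\,\F_{y^jy^l}$, whereas you contract once more with $y^j$ to get $C^k=\phi^k(x)\F$ and then invoke the non-linearity of $\F$ via the rank condition on $g_{ij}$; these are equivalent expressions of the same Finslerian nondegeneracy.
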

In order to prove the proposition we need the following lemma. 

\begin{lemma}
  \label{torsionlemma}
  Let $\rho \in \Psi^1(M)$ be a vector valued 1-form on the base manifold
  $M$. Then
  \begin{equation}
    \label{eq:torsion_lemma}
    \F_{y^i}\rho_j^k\circ \pi=\F_{y^j}\rho_i^k\circ \pi
  \end{equation}
  is satisfied if and only if $\rho_j^k=0$.
\end{lemma}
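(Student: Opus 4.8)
The statement has one trivial direction and one substantive one, so I would split the argument accordingly. The \emph{if} part is immediate: if $\rho_j^k = 0$ for all indices, then both sides of \eqref{eq:torsion_lemma} vanish identically. The whole content lies in the converse, and here the plan is to exploit the asymmetry between the two factors: the components $\rho_i^k$ depend on $x$ alone (being the components of a $1$-form on the base $M$, hence constant along the fibres after composing with $\pi$), whereas the factors $\F_{y^i}$ genuinely vary with the fibre direction $y$. It is this variation that must annihilate $\rho$.

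First I would fix a point $x$ and an upper index $k$, and abbreviate $a_i := \rho_i^k(x)$, a vector constant in $y$. The hypothesis \eqref{eq:torsion_lemma} then reads $\F_{y^i}(x,y)\,a_j = \F_{y^j}(x,y)\,a_i$ for all $i,j$ and all $y \neq 0$; equivalently, the vector $(\F_{y^i}(x,y))_i$ is pointwise proportional to the fixed vector $(a_i)_i$ for every direction $y$. Arguing by contradiction, I would suppose $a \neq 0$ at $x$, pick an index $l$ with $a_l \neq 0$, and set $\mu := \F_{y^l}/a_l$, which is smooth on $T_xM\setminus\{0\}$ since $a_l$ is a nonzero constant and $\F$ is smooth off the zero section. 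The proportionality then upgrades to the clean identity $\F_{y^i} = \mu\,a_i$ for every $i$.

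The key step is to combine this with the two defining features of a Finsler norm. By Euler's relation (positive $1$-homogeneity of $\F$) I obtain $\F(x,y) = y^i\F_{y^i}(x,y) = \mu(x,y)\,(a_i y^i)$. Since $\dim M \geq 2$, I may choose $y_0 \neq 0$ with $a_i y_0^i = 0$; evaluating the previous identity at $y_0$, and using that $\mu(x,y_0)$ is finite because $y_0 \neq 0$, forces $\F(x,y_0) = \mu(x,y_0)\cdot 0 = 0$. This contradicts the strict positivity of $\F$ on $TM\setminus\{0\}$. Hence $a = 0$, that is $\rho_i^k(x) = 0$ for all $i$; as $x$ and $k$ were arbitrary, $\rho_j^k \equiv 0$, which is the claim.

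I expect the main obstacle to be precisely this last move: correctly converting the pointwise-in-$y$ linear dependence into a contradiction. The geometric content is that the gradient map $y \mapsto (\F_{y^i})_i$ of a genuine norm cannot have its image confined to a single line, for otherwise $\F$ would factor through the rank-one map $y \mapsto a_i y^i$ and hence vanish on a nonzero direction. The cleanest route is the Euler-plus-positivity computation above, which sidesteps any connectedness questions about the level sets of $a_i y^i$ and uses neither strong convexity nor the rank-$n$ condition on the metric tensor, only homogeneity and positivity. (For $\dim M = 1$ the statement is vacuous, but so is its only use here, since the torsion $2$-form automatically vanishes in dimension one.)
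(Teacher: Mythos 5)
Your proof is correct, but it finishes by a genuinely different mechanism than the paper. The paper also starts from Euler's relation: contracting \eqref{eq:torsion_lemma} with $y^i$ gives $\rho_j^k\circ \pi=\tfrac{1}{\F}\F_{y^j}\,y^i\rho_i^k\circ \pi$, but it then differentiates this identity in $y^l$, observes that the terms collapse to $\tfrac{1}{\F}\F_{y^jy^l}\,y^i\rho_i^k\circ\pi=0$, and invokes the fact that $\F\cdot\F_{y^jy^l}$ is the angular metric tensor (nonzero by the rank-$n$ condition on $g_{ij}$) to conclude $y^i\rho_i^k\circ\pi=0$, whence $\rho=0$ by one more $y$-differentiation. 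You instead stay at first order: fixing $k$, you read the hypothesis as proportionality $\F_{y^i}=\mu\, a_i$ of the fibre gradient to the constant covector $a_i=\rho_i^k(x)$, use Euler to get $\F=\mu\,(a_iy^i)$, and evaluate on a nonzero $y_0$ in the kernel of $a$ to contradict strict positivity of $\F$. What your route buys is economy of hypotheses: it needs only positive $1$-homogeneity and positivity of $\F$, not the regularity (rank) condition or any second derivatives, so it would apply verbatim to any positive $1$-homogeneous Lagrangian; the paper's route stays within the standard tensor calculus and produces the intermediate identity $y^i\rho_i^k=0$ directly. Both arguments silently require $\dim M\geq 2$ (the paper because the angular metric vanishes identically in dimension one, you because $y_0$ must exist), and indeed the lemma's ``only if'' direction fails for $\dim M=1$ since \eqref{eq:torsion_lemma} is then automatic; you are the more explicit of the two about this restriction.
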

\begin{proof}
  Let us suppose that \eqref{eq:torsion_lemma} is satisfied. Then, using
  the homogeneity property of the Finslerian metric function, the
  contraction by $y^i$ gives that
  \begin{equation}
    \F\rho_j^k\circ \pi=\F_{y^j}y^i\rho_i^k\circ \pi,
  \end{equation}
  or equivalently
  \begin{equation}
    \rho_j^k\circ \pi=\frac{1}{\F}\F_{y^j}y^i\rho_i^k\circ \pi.
  \end{equation}
  Differentiating by the variable $y^l$ we have
  \begin{alignat*}{1}
    0&=-\frac{1}{\F^2}\F_{y^l}\F_{y^j}y^i\rho_i^k\circ
       \pi+\frac{1}{\F}\F_{y^j y^l}y^i\rho_i^k\circ
       \pi+\frac{1}{\F}\F_{y^j}\rho_l^k\circ \pi
    \\
     &=
       -\frac{1}{\F^2}\F_{y^l}\F_{y^j}y^i\rho_i^k\circ
       \pi+\frac{1}{\F}\F_{y^j y^l}y^i\rho_i^k\circ
       \pi+\frac{1}{\F}\F_{y^j}\left(\frac{1}{\F}\F_{y^l}y^i\rho_i^k\circ
       \pi\right)
       =\frac{1}{\F}\F_{y^j y^l}y^i\rho_i^k\circ \pi.
  \end{alignat*}
  Since $\F \cdot \F_{y_j y_l}$ is the component of the angular metric tensor
  (the restriction of the Riemann-Finsler metric to the indicatrix bundle),
  it follows that $y^i\rho_i^k\circ \pi=0$ and, by differentiating with
  respect to the variable $y^j$, we have the vanishing ot the components of
  the one-form. The converse of the statement is trivial.
\end{proof}

\begin{proof}[Proof of Proposition \ref{prop:t=0_W}]
  Using equations \eqref{eq:torsion_1}, \eqref{eq:torsion_2}, and
  \eqref{eq:torsion_3}, we have the result by Lemma \ref{torsionlemma}
  under the choice $\rho(X)=\nr_X W$.
\end{proof}

\begin{theorem}
  \label{thm:t=0_W_randers}
  The  torsion of the natural connection is identically zero if and
  only if the Randers metric $\F$ is Berwaldian.
\end{theorem}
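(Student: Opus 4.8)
The plan is to reduce the theorem to a purely Riemannian statement via Proposition~\ref{prop:t=0_W} and then to invoke the classical theory of Berwaldian Randers metrics. By Proposition~\ref{prop:t=0_W}, the torsion of the natural connection vanishes if and only if $\nr W=0$, so it is enough to prove that the Randers metric $\F$ of the navigation data $(h,W)$ is Berwaldian if and only if $W$ is parallel with respect to $h$.

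First I would pass to the standard $\alpha+\beta$ description. Reading off \eqref{eq:randers_norm}, one has $\alpha=\tfrac1\lambda\sqrt{\innerp{y,W}^2+\lambda\norm{y}^2}$ and $\beta=-\tfrac1\lambda\innerp{y,W}$, so that the Riemannian metric $a$ of $\alpha$ and its inverse are $a_{ij}=\tfrac1{\lambda^2}(\lambda h_{ij}+W_iW_j)$, $a^{ij}=\lambda(h^{ij}-W^iW^j)$, while $\beta_i=-W_i/\lambda$ with $W_i=h_{ij}W^j$. The key external input is the classical fact that a Randers metric $\alpha+\beta$ is Berwaldian exactly when $\beta$ is parallel for the Levi-Civita connection $\nabla^\alpha$ of $\alpha$; with it the theorem reduces to the Riemannian statement $\nabla^\alpha\beta=0 \Longleftrightarrow \nr W=0$, which is where the real work lies.

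The implication $\nr W=0\Rightarrow\nabla^\alpha\beta=0$ is short: if $W$ is $h$-parallel then $\norm{W}^2$ and hence $\lambda$ are constant, so $a_{ij}$ (being assembled from $h$, $\lambda$ and $W_iW_j$, all $\nr$-parallel) satisfies $\nr a=0$; as $\nr$ is torsion-free and $a$-compatible, uniqueness of the Levi-Civita connection gives $\nabla^\alpha=\nr$, and then $\nabla^\alpha\beta=-\tfrac1\lambda\,\nr W=0$.

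The converse is the substantive step and the one I expect to be the main obstacle, since it must upgrade the single condition $\nabla^\alpha\beta=0$ to the full parallelism $\nr W=0$. Assuming $\nabla^\alpha\beta=0$, the $\alpha$-norm of $\beta$ is constant, and the identity $\norm{\beta}_\alpha^2=a^{ij}\beta_i\beta_j=\norm{W}^2$ shows that $\norm{W}^2$, hence $\lambda$, is constant. With $\lambda$ constant I would rewrite $\nabla^\alpha\beta=0$ through the difference tensor $D^k_{ij}=\tfrac12 a^{kl}(\nr_i a_{jl}+\nr_j a_{il}-\nr_l a_{ij})$ between $\nabla^\alpha$ and $\nr$, obtaining $S_{ij}=W_kD^k_{ij}$ for $S_{ij}:=\nr_iW_j$. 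Contracting with $W$ via $W_k a^{kl}=\lambda^2W^l$ and using $\nr_i\norm{W}^2=0$ to annihilate the terms $S_{il}W^l$, the right-hand side becomes symmetric in $i,j$; comparing antisymmetric parts forces $S$ to be symmetric, and comparing symmetric parts gives $\lambda S_{ij}=-\tfrac12\big((\nr_WW)_iW_j+(\nr_WW)_jW_i\big)$. A final contraction with $W^j$, again using $\nr\norm{W}^2=0$, yields $\nr_WW=0$ and therefore $S_{ij}=0$, i.e.\ $\nr W=0$. The delicate point throughout is that constancy of $\norm{W}$ is precisely what removes every obstructing term, so the bookkeeping in this symmetrization-and-contraction argument is the heart of the proof.
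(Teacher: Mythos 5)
Your proposal is correct, and its skeleton matches the paper's: both reduce the theorem via Proposition~\ref{prop:t=0_W} to the statement that $\F$ is Berwaldian if and only if $\nr W=0$, both invoke the classical criterion (the paper cites \cite[Theorem 1]{Vincze2015}) that a Randers metric $\alpha+\beta$ is Berwaldian exactly when $\nabla^{\alpha}\beta=0$, and both use the identity $\alpha^{ij}\beta_i\beta_j=\norm{W}^2$ to extract constancy of $\norm{W}$ (hence of $\lambda$) as the key unlocking step. The differences are in execution. For the direction $\nr W=0\Rightarrow$ Berwaldian, the paper argues conceptually: parallel $W$ makes the natural and Riemannian transports coincide, so $\nr$ is a torsion-free linear connection whose transport preserves $\F$, which is the definition of Berwald; your route through $\nr a=0$ and uniqueness of the Levi-Civita connection is equally valid and stays entirely inside the $\alpha+\beta$ formalism. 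For the converse, the paper expands the Christoffel symbols of $\alpha$ in terms of the conformal metric $\gamma=h/\lambda$ and the closedness $\partial_i\beta_j=\partial_j\beta_i$, arriving directly at $\partial_i\beta_j-A^k_{ij}\beta_k=0$; you instead work with the difference tensor $D^k_{ij}$ of the two Levi-Civita connections and a symmetrize-and-contract argument that passes through $\nr_W W=0$ before concluding $\nr W=0$. The two computations rest on the same facts (constancy of $\norm{W}$, and symmetry of $\nr W$, which in the paper appears as $d\beta=0$); incidentally, your symmetry step is automatic, since $D^k_{ij}$ is symmetric in $i,j$ a priori as the difference of two torsion-free connections, so no computation is needed there. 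One pedantic point: your final contraction yields only $\norm{W}^2\,\nr_W W=0$, so you should add that, $\norm{W}$ being constant, either $W\equiv 0$ (trivially parallel) or $\norm{W}>0$ everywhere; a one-line remark closes this gap.
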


\begin{proof}
  Let us suppose that the  torsion of the natural connection is
  identically zero. Then, by Proposition \ref{prop:t=0_W}, the vanishing of
  the  torsion is equivalent to the parallelism of the vector field
  $W$. If $W$ is parallel with respect to the Riemannian metric $h$, then
  the natural parallel translation and the Riemannian parallel translation
  with respect to the Riemannian metric $h$ coincide. The natural parallel
  translation preserves the Finlserian norm function $\F$. So does the
  Riemannian parallel translation with respect to the Riemannian metric
  $h$. Therefore $\F$ is Berwaldian and its canonical connection is
  $\nabla^R$ as a torsion-free linear connection such that the parallel
  transport preserves the Finslerian metric function $\F$.

  To prove the converse, let us suppose that the Randers metric $\F$ is
  Berwaldian. We set the following notations:
  \begin{equation}
    \label{eq:randers}
    \F(x,y)=\sqrt{\alpha_{ij}(x)y^iy^j}+\beta_i(x)y^i,
  \end{equation}
  where
  \begin{equation}
    \alpha_{ij}=\frac{1}{\lambda}h_{ij}+\beta_i\beta_j,
    \ \ \textrm{and} \ \ \beta_i=-\frac{1}{\lambda}h_{ik}W^k.
  \end{equation}
  Using the abbreviation $\gamma_{ij}=h_{ij}/\lambda$, we have 
  \begin{equation}
    \alpha_{ij}=\gamma_{ij}+\beta_i\beta_j,
    \ \ \textrm{and} \ \ \beta_i=-\gamma_{ik}W^k.
  \end{equation}
  In particular,
  \begin{equation}
    \alpha^{ij}=\gamma^{ij}-\frac{1}{1+\beta^s\beta_s}\beta^i \beta^j,
  \end{equation}
  where $\beta^i=\gamma^{ik}\beta_k$.  It is known (see for example
  \cite[Theorem 1]{Vincze2015}) that $\F$ is Berwaldian if and only if
  $\nabla^{\alpha} \beta =0$, where $\nabla^{\alpha}$ is the Lévi-Civita
  connecton of the Riemannian metric $\alpha$. In terms of local
  coordinates we have 
  \begin{equation}
    \label{eq:covconst}
    \partial_i \beta_j-\Gamma^k_{ij} \beta_k=0,
  \end{equation}
  where the functions $\Gamma^k_{ij}$ are the Christoffel symbols of
  $\nabla^{\alpha}$. In particular,
  \begin{equation}
    \label{eq:d_beta}
    \partial_i \beta_j=\partial_j \beta_i.
  \end{equation}
  Since $\beta$ is a covariant constant one-form, it follows that its dual
  vector field is of constant length with respect to the Riemannian metric $\alpha_{ij}$:
  \begin{equation}
    \textrm{const}=\alpha^{ij}\beta_i\beta_j=\left (\gamma^{ij}
      -\frac{1}{1+\beta^s \beta_s} \beta^i \beta^j\right)\beta_i\beta_j
    =\frac{\beta^s\beta_s}{1+\beta^s\beta_s}
    =\frac{|W|_{\gamma}^2}{1+|W|_{\gamma}^2}=|W|^2.
  \end{equation}
  Therefore the conformal factor $\lambda$ between the Riemannian metrics
  $\gamma_{ij}$ and $h_{ij}$ is constant and
  $\nabla^R=\nabla^\gamma$. Using that
  \begin{equation}
    \alpha^{km}\beta_k=\left(\gamma^{km}-\frac{1}{1+\beta^s\beta_s}\beta^k
      \beta^m\right)\beta_k
    =\frac{1}{1+\beta^s\beta_s}\beta^m
    =\frac{1}{1+\beta^s\beta_s}\gamma^{km}\beta_k
  \end{equation}
  we have
  \begin{alignat*}{1}
    0=&\partial_i \beta_j-\Gamma_{ij}^k \beta_k=\partial_i
        \beta_j-\tfrac{1}{2}\alpha^{km}\left(\partial_i\alpha_{jm}
        +\partial_j\alpha_{im}-\partial_m \alpha_{ij}\right)\beta_k
    \\
    =&\partial_i \beta_j-\tfrac{1}{2}\tfrac{1}{1+\beta^s\beta_s} \gamma^{km}
       \left(\partial_i\gamma_{jm}+\partial_j\gamma_{im}
       -\partial_m    \gamma_{ij}\right)\beta_k
    \\
      &- \tfrac{1}{2}\tfrac{1}{1+\beta^s\beta_s}\gamma^{km}
        \left(\beta_m\left(\partial_i\beta_j +\partial_j\beta_i\right) +
        \beta_j\left(\partial_i \beta_m - \partial_m  \beta_i\right)
        + \beta_i\left(\partial_j \beta_m-\partial_m  \beta_j\right)\right)
        \beta_k\stackrel{\eqref{eq:d_beta}}{=}
    \\
    = & \partial_i \beta_j - \tfrac{1}{1+\beta^s\beta_s}A_{ij}^k\beta_k
        - \tfrac{\beta^s\beta_s}{1+\beta^s\beta_s}\partial_i\beta_j 
        = \frac{1}{1+\beta^s\beta_s}
        \left(\partial_i\beta_j-A_{ij}^k\beta_k\right).
  \end{alignat*}
  Therefore the one-form $\beta$ is parallel with respect to $h$, that is
  its dual vector field is covariant constant: $\nabla^R W=0$ and,
  according to Proposition \ref{prop:t=0_W}, the torsion is identically
  zero.
\end{proof}

\begin{theorem}
  The vector field $W$ is of constant length with respect to the Riemannian
  metric $h$ if and only if the Randers metric $\F$ is Wagnerian.
\end{theorem}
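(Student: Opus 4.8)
The plan is to reduce the assertion to the $(\alpha,\beta)$-presentation \eqref{eq:randers} of $\F$ and to the Wagner characterization of Randers metrics, in close parallel with the proof of Theorem~\ref{thm:t=0_W_randers}. The genuinely Finslerian input, which carries the content of the statement, is the Wagner analogue of the Berwald characterization used above: while $\F$ is Berwaldian exactly when $\beta$ is \emph{parallel} with respect to $\alpha$ ($\nabla^\alpha\beta=0$), it is Wagnerian exactly when $\beta$ has \emph{constant length} with respect to $\alpha$, i.e.
\begin{equation*}
  \alpha^{ij}\beta_i\beta_j=\mathrm{const}.
\end{equation*}
Geometrically this is the relaxation of parallelism to the existence of a compatible (metrical) linear connection for which $\beta$ is still parallel: a $1$-form of constant $\alpha$-length can be rendered parallel by a suitable such connection, typically carrying torsion, which realizes the Wagner structure, while the converse direction, that the Wagner structure forces constant length, is immediate since a parallel form has constant length with respect to the preserved metric. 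I would invoke this equivalence from \cite{Vincze2015}, and it is the step on which everything hinges.

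Granting the characterization, the remaining task is to identify the $\alpha$-length of $\beta$ with the Riemannian $h$-length of $W$, and this computation is already present inside the proof of Theorem~\ref{thm:t=0_W_randers}. Indeed, using $\beta_i=-\gamma_{ik}W^k$, $\alpha^{ij}=\gamma^{ij}-\frac{1}{1+\beta^s\beta_s}\beta^i\beta^j$ and $|W|_\gamma^2=|W|^2/\lambda$, one gets
\begin{equation*}
  \alpha^{ij}\beta_i\beta_j=\frac{\beta^s\beta_s}{1+\beta^s\beta_s}
  =\frac{|W|_\gamma^2}{1+|W|_\gamma^2}=|W|^2 ,
\end{equation*}
so that the $\alpha$-length of $\beta$ equals the $h$-length of $W$. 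In particular one of these is constant on $M$ if and only if the other is.

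Chaining the two steps yields
\begin{equation*}
  \F\ \text{Wagnerian}\iff\alpha^{ij}\beta_i\beta_j=\mathrm{const}
  \iff |W|=\mathrm{const},
\end{equation*}
which is precisely the claim. The main obstacle is the first equivalence, the Finslerian characterization of the Wagner property through the constancy of the $\alpha$-length of $\beta$; once it is secured, the theorem follows at once, since the identity $\alpha^{ij}\beta_i\beta_j=|W|^2$ costs nothing new, having already been established in the preceding proof.
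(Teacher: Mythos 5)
Your proposal is correct and follows essentially the same route as the paper: it invokes the characterization from \cite[Theorem 2]{Vincze2015} that $\F$ is Wagnerian if and only if the dual of $\beta$ has constant $\alpha$-length, and then uses the identity $\alpha^{ij}\beta_i\beta_j=\frac{\beta^s\beta_s}{1+\beta^s\beta_s}=|W|^2$ to translate this into the constancy of $\norm{W}$. The paper additionally exhibits the explicit semi-symmetric metric linear connection realizing the Wagner structure, but this is supplementary to the same core argument you give.
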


\begin{proof}
  Using the notations in the proof of Theorem \ref{thm:t=0_W_randers}, it is known (see for example \cite[Theorem 2]{Vincze2015}) that $\F$ is   Wagnerian if and only if the dual vector field of $\beta$ is of constant length with respect to
  the Riemannian metric $\alpha$. The statement
  follows immediately by the computation
  \begin{displaymath}
    \alpha^{ij}\beta_i\beta_j=\left (\gamma^{ij} - \frac{1}{1+\beta^s
        \beta_s}\beta^i \beta^j\right)\beta_i\beta_j
    =\frac{\beta^s\beta_s}{1+\beta^s\beta_s}
    =\frac{|W|_{\gamma}^2}{1+|W|_{\gamma}^2}=|W|^2.
  \end{displaymath}
  The parallel transport preserving the Finslerian metric is given by the
  linear connection
  \begin{displaymath}
    \nabla_X Y=\nabla^{\alpha}_X Y+\frac{\alpha(\nabla^{\alpha}_X
      \beta^{\sharp}, Y)\beta^{\sharp} -\alpha(Y, \beta^{\sharp})
      \nabla^{\alpha}_X \beta^{\sharp}}{|\beta^{\sharp}|_{\alpha}^2},
  \end{displaymath}
  where $\nabla^{\alpha}$ is the Lévi-Civita connection of the Riemannian
  metric $\alpha$ and the sharp operator is taken with respect to
  $\alpha$. It can be easily seen that $\nabla \alpha=0$ and
  $\nabla \beta^{\sharp}=0$ provided that the dual vector field is of
  constant length.  Therefore, we have both $\nabla \alpha=0$ and
  $\nabla \beta=0$, that is, the Randers metric is invariant under the
  parallel transports with respect to $\nabla$.
  
\end{proof}

\bigskip

\section{Autoparallel curves, natural spray}
\label{sec:5}

Let $(h,W)$ be navigation data on the manifold $M$. The curve
$c\colon I \to M$ is autoparallel with respect to the natural parallelism
if the velocity vector field $\dot{c}$ along $c$ is parallel with respect
to the parallelism introduced in Definition \ref{def:nat_parallel}.  Using
the covariant derivative $\nabla$ introduced in section \ref{subsec:covder}
we get that $c$ is an autoparallel curve if and only if
\begin{equation}
  \label{eq:autop}
  \nabla_{\!\dot{c}} \, \dot{c} =0.
\end{equation}
According to the expression \eqref{eq:covariant_natural} of $\nabla$ in
terms of the Levi-Civita connection $\nr$ of the Riemannian metric $h$ and
the vector field $W$ we get that $c$ is autoparallel with respect to the
natural parallel structure if and only if it satisfies
\begin{equation}
  \label{eq:naural_geod}
  \nr_{\!\dot{c}} \, \dot{c}- \F(\dot{c}) \! \cdot \!
  \nr_{\!\dot{c}} \, W=0
\end{equation}
In a local coordinate system we obtain the second order differential
equation
\begin{equation}
  \label{eq:geod_eq_2}
  \ddot{c}^k + \dot{c}^i 
  \left( A^k_{ij} \dot{c}^j - \F(\dot{c}) A^k_{ij} W^j
    - \F(\dot{c}) \tfrac{\partial W^k}{\partial x^i}
  \right)  =0,
\end{equation}
where $A^k_{ij}$ are the Christoffel symbols of the Levi-Civita connection
$\nr$. The spray
\begin{equation}
  \label{eq:nat_spray}
  S = y^k \frac{\partial}{\partial x^k}
  -2G^k(x,y) \frac{\partial}{\partial y^k}
\end{equation}
corresponding to the system \eqref{eq:geod_eq_2} has coefficients 
\begin{equation}
  \label{eq:G_k}
  G^k(x,y) =  \tfrac{1}{2} \left( A^k_{ij} y^i y^j - \F(x,y) y^i A^k_{ij}
    W^j - \F(x,y)y^i \tfrac{\partial W^k}{\partial x^i}   \right).
\end{equation}
\begin{definition}
  The spray \eqref{eq:nat_spray} with coefficients \eqref{eq:G_k}
  corresponding to the natural parallelism will be called the \emph{natural
    spray}, and the connection
  \begin{equation}
    \label{eq:nat_sym_con}
    \overline{\Gamma}:=[J,S]
  \end{equation}
  generated by the
  natural spray will be called the \emph{natural symmetric connection}.
\end{definition}

\begin{remark}
  The natural connection \eqref{eq:natural_connection} and the natural
  symmetric connection \eqref{eq:nat_sym_con} are  different in general. Indeed, the torsion of the natural symmetric connection is
  identically zero but, in general, the torsion of the natural connection
  is not.
\end{remark}

\begin{lemma} 
  \label{projectivelemma} 
  Equation
  \begin{equation}
    \label{eq:lemma}
    y^i\rho_i^k\circ \pi=\varphi(x,y) y^k
  \end{equation}
  is satisfied for a zero homogeneous function $\varphi$ on the tangent
  manifold and a one-form $\rho$ on the base manifold if and only if
  $\varphi(x,y)=\varphi(x)$ and $\rho_j^k(x)=\varphi(x) \delta_j^k.$
\end{lemma}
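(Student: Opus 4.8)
The plan is to dispose of the easy implication first and then pin down $\varphi$ by a trace argument. The converse is immediate: if $\rho_j^k(x)=\varphi(x)\delta_j^k$ with $\varphi$ a function of $x$ alone, then $y^i\rho_i^k\circ\pi=\varphi(x)\,y^k$, which is exactly \eqref{eq:lemma} with a (trivially) zero-homogeneous $\varphi$. So the whole content is in the forward direction.

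For the forward implication I would start from \eqref{eq:lemma} and differentiate it with respect to $y^j$. The crucial preliminary observation is that the components $\rho_i^k$ are functions on the base $M$, so their pullbacks $\rho_i^k\circ\pi$ are constant along the fibres and their $y$-derivatives vanish; hence $\partial_{y^j}(y^i\rho_i^k\circ\pi)=\rho_j^k\circ\pi$. Differentiating \eqref{eq:lemma} therefore gives the clean identity
\[
  \rho_j^k\circ\pi=\frac{\partial\varphi}{\partial y^j}\,y^k+\varphi\,\delta_j^k.
\]

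The decisive step is to contract this with $k=j$ and summing, and then invoke homogeneity. Since $\varphi$ is zero-homogeneous, Euler's relation yields $y^j\partial_{y^j}\varphi=0$, so with $n=\dim M$ the trace collapses to
\[
  \rho_j^j\circ\pi=y^j\frac{\partial\varphi}{\partial y^j}+n\varphi=n\varphi,
\]
whence $\varphi=\tfrac1n\,\rho_j^j\circ\pi$ is the pullback of a function on the base, i.e. $\varphi(x,y)=\varphi(x)$. Feeding $\partial_{y^j}\varphi=0$ back into the differentiated identity then forces $\rho_j^k(x)=\varphi(x)\delta_j^k$, which is the claim. I do not expect a genuine obstacle here; the only two points that need care are the vanishing of the $y$-derivative of $\rho_i^k\circ\pi$ (which makes the differentiated identity exact rather than merely approximate) and the use of Euler's theorem for the zero-homogeneous $\varphi$, which is precisely what converts the trace into a formula for $\varphi$ and simultaneously forces its independence of $y$.
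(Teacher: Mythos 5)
Your proof is correct and follows essentially the same route as the paper's: differentiate \eqref{eq:lemma} fibrewise (using that $\rho_i^k\circ\pi$ is constant along the fibres), take the trace $k=j$, invoke Euler's relation for the zero-homogeneous $\varphi$ to get $\varphi=\tfrac1n\rho_j^j\circ\pi$, and then recover $\rho_j^k=\varphi\,\delta_j^k$. The only cosmetic difference is that the paper re-differentiates the original identity once $\varphi$ is known to be basic, whereas you substitute $\partial_{y^j}\varphi=0$ back into the differentiated one; these are equivalent steps.
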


\begin{proof}
  First of all we prove that the zero homogeneous function $\varphi$
  depends only on the position. Differentiating with respect to the
  variable $y^l$ we have
  \begin{displaymath}
    \rho^k_l\circ \pi=\varphi_{y^l} y^k+\delta_l^k \varphi.
  \end{displaymath}
  Taking $k=l$
  \begin{displaymath}
    \rho^k_k\circ \pi=\varphi_{y^k} y^k+\delta_k^k \varphi,
  \end{displaymath}
  where $y^k \varphi_{y^k}=0$ because $\varphi$ is zero
  homogeneous. Therefore
  \begin{displaymath}
    \varphi=\frac{\rho_k^k\circ \pi}{n}\ \ \Rightarrow\ \
    \varphi(x,y)=\varphi(x)
  \end{displaymath}
  depends only on the position. Finally,
  \begin{displaymath}
    y^i\rho_i^k\circ \pi=\varphi\circ \pi y^k\ \ \Rightarrow \ \
    \rho_j^k(x)=\varphi(x)\delta_j^k
  \end{displaymath}
  by differentiating with respect to the variable $y^j$. (The converse of
  the statement is trivial.)
\end{proof}

\begin{definition}
  The vector field $W$ is called a \emph{concircular vector field} with
  respect to the Riemannian metric $h$ if there exist a function
  $\varphi \in C^\infty(M)$ such that
  \begin{equation}
    \label{eq:concircular_W}
    \nabla^R_X W=\varphi \, X, 
  \end{equation}
  for all $X \in \X{M}$, where $\nabla^R$ denotes the Levi-Civita
  connection of $h$. The function $\varphi$ is called the potential
  function of $W$.
\end{definition}

\begin{proposition}
  \label{prop:S_S_rieman_proj_rel}
  Let $(h, W)$ be a navigation data.   
  The natural spray $S$ is projectively related to the quadratic spray
  $S^R$ corresponding to the geodesic equations of the Riemannian
  connection $\nr$ if and only if $W$ is a concircular vector field with
  respect of $h$.
\end{proposition}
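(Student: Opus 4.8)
The plan is to compare the coefficients of the natural spray $S$ from \eqref{eq:G_k} with those of the Riemannian spray $S^R$, whose coefficients are $G^k_R=\tfrac{1}{2}A^k_{ij}y^iy^j$, and to translate projective relatedness into a pointwise condition on $W$. Two sprays are projectively related precisely when their coefficients differ by a term of the form $P(x,y)\,y^k$, with $P$ positively homogeneous of degree one in $y$. First I would compute
\[
G^k-G^k_R=-\tfrac{1}{2}\F(x,y)\,y^i\Bigl(A^k_{ij}W^j+\tfrac{\partial W^k}{\partial x^i}\Bigr)
=-\tfrac{1}{2}\F(x,y)\,(\nr_{y}W)^k,
\]
where the bracket is recognized as the components of the Riemannian covariant derivative of $W$ in the direction $y=y^i\partial_i$. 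This identification is the computational heart of the argument and is routine.

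Next I would observe that, since $\F>0$ on $\TM$, the projective relatedness condition $G^k-G^k_R=P(x,y)\,y^k$ is equivalent to
\[
(\nr_{y}W)^k=\varphi(x,y)\,y^k,\qquad \varphi:=-\tfrac{2P}{\F}.
\]
Because $\nr_{y}W$ is linear, hence $1$-homogeneous, in $y$ while $y^k$ is $1$-homogeneous, the factor $\varphi$ is necessarily zero-homogeneous in $y$. Setting $\rho^k_i:=A^k_{ij}W^j+\partial W^k/\partial x^i$, that is $\rho(X)=\nr_X W$, this is exactly equation \eqref{eq:lemma} of Lemma \ref{projectivelemma}.

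The main step is then to invoke Lemma \ref{projectivelemma}: the displayed identity holds for a zero-homogeneous $\varphi$ and a vector-valued $1$-form $\rho$ on $M$ if and only if $\varphi$ depends only on $x$ and $\rho^k_j(x)=\varphi(x)\delta^k_j$, i.e.\ $\nr_X W=\varphi(x)X$ for all $X$; this is precisely the defining property \eqref{eq:concircular_W} of a concircular vector field with potential $\varphi$, giving the ``only if'' direction. For the converse, if $W$ is concircular with potential $\varphi$, substituting $\nr_{y}W=\varphi(x)\,y$ into the computed difference yields $G^k-G^k_R=-\tfrac{1}{2}\F(x,y)\varphi(x)\,y^k$, where $P:=-\tfrac{1}{2}\F\varphi$ is $1$-homogeneous because $\F$ is $1$-homogeneous in $y$ and $\varphi$ is independent of $y$; hence $S$ and $S^R$ are projectively related. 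I expect the only genuine subtlety to be bookkeeping the homogeneity degrees so that Lemma \ref{projectivelemma} applies verbatim, the geometric content being carried entirely by that lemma together with the identification of the spray-coefficient difference as a scalar multiple of $\nr_{y}W$.
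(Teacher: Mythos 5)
Your proposal is correct and takes essentially the same route as the paper: you compute the spray-coefficient difference, identify it as a multiple of $\nr_{y}W$ so that projective relatedness becomes the paper's equation \eqref{eq:proj_eq}, and then invoke Lemma \ref{projectivelemma} to obtain the concircular condition \eqref{eq:concircular_W}, with the converse by direct substitution. The only differences are cosmetic sign and factor conventions in the projective factor $P$ (resp.\ $\varphi$), which are immaterial since $P$ is an arbitrary positively $1$-homogeneous function.
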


\begin{proof}
  The sprays $S$ and $S^R$ are projectively related iff their paths
  coincide as point sets, that is they coincide up to reparametrizations.
  It is well-known that the equivalent condition is the existence of a
  positively one-homogeneous function $P$ such that $S=S^R + P \mathcal C$.
  The local expression of the quadratic spray $S^R$ of the linear
  connection $\nr$ is
  \begin{equation}
    \label{eq:nat_spray01}
    S^R = y^k \frac{\partial}{\partial x^k}
    -A^k_{ij}(x)y^iy^j \frac{\partial}{\partial y^k}.
  \end{equation}
  Therefore, using equations \eqref{eq:nat_spray}, \eqref{eq:G_k} and
  \eqref{eq:nat_spray01}, the projective equivalence gives that there
  exists a positively 1-homogeneous function $P(x,y)$ such that
  \begin{equation}
    \label{eq:proj_eq}
    \F(x,y)y^i\rho^k_i(x)=P(x,y)y^k,
  \end{equation}
  where $\rho(X)=\nabla^R_X W$.  Applying Lemma \ref{projectivelemma} with
  $\varphi=P/\F$, it follows that the sprays are projectively related if
  and only if $P(x,y)=\varphi(x)\F(x,y)$, that is the projective factor is
  conformally related to the Randers metric $\F$, and
  $\rho_j^k(x)=\varphi(x)\delta_j^k$.

\end{proof}

\begin{example}
  \label{ex:euc_radial_wind}
  Let $M$ be the interior of the Euclidean unit ball in $\mathbb{R}^n$
  equipped by the standard Euclidean inner product $h_{ij}=\delta_{ij}$. If
  \begin{equation}
    W=-x^i\frac{\partial}{\partial x^i}
  \end{equation}
  then we have that $\nabla^{R}_X W=-X$ and the natural spray is
  \begin{equation}
    \label{Funkspray}
    S=y^i\frac{\partial}{\partial x^i}-\F(x,y) \mathcal C,
  \end{equation}
  where $C$ is the so-called
  Liouville vector field and the Randers metric $\F$ coincides the Funk
  metric \cite{Okada_1983}. Therefore, the natural spray \eqref{Funkspray} is
  metrizable as the canonical spray of the Funk metric.
\end{example}
Example \ref{ex:euc_radial_wind} is particularly interesting, since the
geodesic structure obtained from the natural parallelism is
metrizable. This is not always the case,  as the next example shows.

\begin{example}
  Let $h$ be the Euclidean metric on $\R^2$, defined by
  $h_{ij}=\delta_{ij}$, and let the wind $W$ be given by the infinitesimal
  rotation
  \begin{equation}
    \label{eq:_rotation}
    W =x_1\frac{\partial}{\partial x_2}-x_2\frac{\partial}{\partial x_1}.
  \end{equation}
  One can show that the Lie brackets $[\delta_1,\delta_2]$, and
  $\bigl[[\delta_1,\delta_2],\delta_2 \bigr]$ of the horizontal vector
  fields \eqref{eq:natural_horiz} give independent vertical directions,
  therefore the distribution generated by horizontal vector fields:
  \begin{equation}
    \label{eq:14}
    \mathcal D_{\mathcal H}:= \bigl\langle \mathcal H
    \bigr \rangle_{Lie}  \!
    =\emph{Span}\big\{\delta_i, \, [\delta_{i_1},\delta_{i_2}], \,
    \bigl[\delta_{i_1}, [\delta_{i_2},\delta_{i_3}]
    \bigr] \dots  \big\},
  \end{equation}
  called the holonomy distribution, is $4$-dimensional: it contains the
  whole horizontal and vertical space, that is
  \begin{equation}
    \label{eq:D_H}
    \mathcal D_{\mathcal H}=TTM.
  \end{equation}
  If the associated natural spray were Finsler metrizable, then the
  corresponding Finsler norm function $F$ would be a holonomy invariant
  function \cite{Elgendi_Muzsnay_2017}. Therefore, for any
  $X \in \mathcal D_{\mathcal H}$ one should have $\mathcal L_X F
  =0$. However, from \eqref{eq:D_H}, it follows that $F$ must be a constant
  function, which leads to a contradiction. This shows that, in this case,
  the natural spray is not Finsler metrizable.
\end{example}

It is a natural problem to characterize navigation data $(h,W)$ for which
the geometric structures — such as the parallel translations or the
autoparallel curves and sprays — associated with the natural parallelism
and the Randers-type Finsler metric $\F$ coincide. The simplest situation
occurs when the parallel translations coincide. We have the following

\begin{property}
  Let $(h,W)$ be a navigation data on a manifold $M$. The parallel
  translations with respect to the associated Randers metric $\F$ and the
  natural parallel translations \eqref{eq:natural_parallel} coincide if and
  only if $W$ is parallel with respect to $h$.
\end{property}

\begin{proof}
  Let us suppose that the parallel translations coincide.  Using Grifone's
  terminology, the torsion of the connection associated with $\F$, and
  therefore the torsion of the natural connection \eqref{eq:torsion_1}
  vanishes.  From Proposition \ref{prop:t=0_W} we get that $W$ is parallel
  with respect to the Riemann metric $h$, and using Property
  \ref{prop:W_parallel} we find that all three parallel translations, that
  is the natural, the Riemannian, and the Finslerian (which is actually of
  Berwald type -- see Theorem \ref{thm:t=0_W_randers}) coincide.
\end{proof}

\begin{remark}
  It is important to note that the autoparallel curves (resp.~sprays) of
  the Randers metric $\F$ and those arising from the natural parallel
  translations associated with navigation data may coincide, even when the
  parallel translations themselves do not. A concrete illustration of this
  phenomenon is given in \ref{ex:euc_radial_wind}, where the navigation
  data $(h,W)$ involves a vector field $W$ that is not parallel with
  respect to $h$.  The two parallel transports are clearly different since
  the holonomy group of the natural parallel translation is
  finite-dimensional, whereas the holonomy of $\F$, the Funk metric, is
  known to be infinite-dimensional \cite{Hubicska_Muzsnay_2020a}.  In the
  proposition below, we characterize the specific cases in which the
  natural parallelism and the Randers metric associated with a navigation
  problem yield the same geodesic structure.
\end{remark}

\begin{proposition}
  \label{prop:S_S_randers}
  The natural spray \eqref{eq:nat_spray} and the geodesic spray of the
  Randers metric $\mathcal F = \alpha + \beta$ associated with the
  navigation data $(h,W)$ coincide if and only if $W$ is a concircular
  vector field with respect to the Riemannian metric $h$.
\end{proposition}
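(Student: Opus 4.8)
The plan is to reduce the equality of the two sprays to a purely projective comparison, and then to settle that comparison by the explicit navigation form of the Randers geodesic spray. First I would record that both sprays preserve $\F$: along an autoparallel curve the velocity is parallel in the natural sense, so $\F(\dot c)$ is constant and hence $S\F=0$, while the $\F$-geodesics have constant speed, so $S_\F\F=0$ for the geodesic spray $S_\F$ of $\F$. Two sprays that are projectively related, say $S=S_\F+P\C$, and that both annihilate $\F$ must coincide: indeed $0=(S-S_\F)\F=P\,\C\F=P\F$ forces $P\equiv 0$, since $\C\F=\F$ by the $1$-homogeneity of $\F$. Thus it suffices to decide when $S$ and $S_\F$ are projectively related, that is, when they have the same unparametrised paths.

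For the direction $\Leftarrow$, suppose $W$ is concircular, $\nr_X W=\varphi X$. Then $(\nr_y W)^k=\varphi\,y^k$, so the coefficients \eqref{eq:G_k} become $G^k=\tfrac12 A^k_{ij}y^iy^j-\tfrac12\varphi\F\,y^k$, i.e. $S=S^R+\varphi\F\,\C$ is projectively related to the Riemannian spray $S^R$, in agreement with Proposition~\ref{prop:S_S_rieman_proj_rel}. It then remains to see that $S_\F$ is projectively related to $S^R$ as well. Concircularity makes $\beta=-\tfrac1\lambda W^\flat$ closed: here $W^\flat$ is closed because $\nr W$ is symmetric, and $d\lambda=-2\varphi W^\flat$ is proportional to $W^\flat$, whence $d\beta=\lambda^{-2}\,d\lambda\wedge W^\flat=0$. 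Since a closed $\beta$ contributes only an endpoint term to the Randers length $\int\alpha+\int\beta$, the $\F$-geodesics coincide as point sets with the $\alpha$-geodesics; and for concircular $W$ the Riemannian metric $\alpha=\tfrac1\lambda h+\tfrac1{\lambda^2}W^\flat\!\otimes W^\flat$ is projectively equivalent to $h$ (this is exactly the projective flatness visible in the Funk case of Example~\ref{ex:euc_radial_wind}, and is the computational heart, shared with the converse below). Hence $S_\F$ is projectively related to $S^R$, therefore to $S$, and the preservation principle of the first step upgrades this to $S=S_\F$.

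For the direction $\Rightarrow$, I would insert the explicit natural-spray coefficients \eqref{eq:G_k} and the geodesic-spray coefficients $G^k_\F$ of $\F=\alpha+\beta$ into the identity $G^k_S=G^k_\F$, and separate the rational part from the part proportional to $\sqrt{\alpha_{ij}y^iy^j}$, after decomposing $\nr W$ into its symmetric part $\Sigma_{ij}=\tfrac12(\nr_iW_j+\nr_jW_i)$ and its antisymmetric (curl) part $\Omega_{ij}=\tfrac12(\nr_iW_j-\nr_jW_i)$. The irrational contribution carries the curl and must vanish, while the rational contribution encodes the symmetric part; matching both and invoking Lemma~\ref{torsionlemma} (or Lemma~\ref{projectivelemma}) should collapse the two conditions to $\nr_XW=\varphi X$, i.e. $W$ concircular. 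I expect the main obstacle to be precisely this explicit computation: the geodesic spray $G^k_\F$ is naturally written in the $\alpha$-geometry, whereas the natural spray \eqref{eq:G_k} lives in the $h$-geometry, so the comparison requires carefully converting between the two metrics — which differ by the conformal factor $\lambda$ and the rank-one term $\beta\otimes\beta$ — and tracking how the curl and conformal parts of $\nr W$ distribute among the rational and irrational terms of the Randers spray.
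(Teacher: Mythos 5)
Your opening reduction is a genuinely nice observation that the paper does not use: since the natural parallel translation preserves $\F$, autoparallels of $S$ have constant $\F$-speed, and $\F$-geodesics have constant speed, so $S\F=0$ and $S_\F\F=0$; hence any projective relation $S=S_\F+P\,\C$ forces $0=(S-S_\F)\F=P\,\C\F=P\F$, so $P=0$. This correctly reduces the proposition to a statement about unparametrized geodesics, and your computation that concircularity turns \eqref{eq:G_k} into $S=S^R+\varphi\F\,\C$ is also correct (it is the content of Proposition \ref{prop:S_S_rieman_proj_rel}).

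However, both directions then stop exactly where the real work begins, so the proposal has genuine gaps. In the direction ``concircular $\Rightarrow$ equal'', after correctly showing $d\beta=0$ (so $S_\F$ is projectively related to the geodesic spray of $\alpha$), you still need the metric $\alpha=\gamma+\beta\otimes\beta$, $\gamma=h/\lambda$, to be projectively equivalent to $h$; you assert this, call it ``the computational heart'', and never prove it. It is not a quotable standard fact: when $\varphi\neq 0$ the factor $\lambda$ is non-constant, so neither the conformal rescaling nor the rank-one correction is harmless, and establishing this projective equivalence is essentially the same computation you are trying to bypass. In the converse direction you give only a plan (``should collapse'', ``I expect the main obstacle to be precisely this explicit computation''): the separation into rational terms and multiples of $\sqrt{\alpha_{ij}y^iy^j}$, together with the symmetric/skew split of $\nr W$, is indeed the right idea --- it is, in disguise, the paper's comparison of $G^i(v)-G^i(-v)$ with $\tilde{G}^i(v)-\tilde{G}^i(-v)$ --- but it is not carried out, and the dictionary between the $h$-geometry and the $\alpha$-geometry that you flag as the obstacle is precisely what is missing. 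The paper resolves both directions with one tool you lack: the Cheng--Shen formula \cite{Cheng_Shen_2009} expressing the Randers geodesic spray directly in terms of the navigation data $(h,W)$ through the tensors $R_{ij}$ and $S_{ij}$ (the $h$-lowered symmetric and skew parts of $\nr W$). With it, the forward direction is a substitution ($R_{ij}=\varphi h_{ij}$, $S_{ij}=0$), and the converse follows from the even/odd comparison, which yields successively $R_{00}=\varphi h^2$, then $S^i=0$, then $R_{ij}-S_{ij}=\varphi h_{ij}$, i.e.\ concircularity. Without this formula or an equivalent explicit computation, neither direction of your outline can be completed as written.
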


\begin{proof}
  First, suppose that $W$ is a concircular vector field with respect to the
  Riemannian metric $h$, and let $\tilde{S}$ denote the geodesic spray of
  the Randers metric $\F=\alpha+\beta$. Using \cite[Formula
  2.1]{Cheng_Shen_2009}, the spray coefficients of $\tilde{S}$ are given
  by
  \begin{displaymath}
    \tilde{G}^i=\tfrac{1}{2}y^j y^k A_{kj}^i \! +  \! R_0 y^i \! + \!
    \frac{R_{00}}{2}W^i \! - \! \frac{\F^2}{2}\left(S^i \! + \! R^i \! - \!
      R W^i\right) \!  \! - \! \F\left(S_0^i \! + \! \frac{R}{2}y^i \! + \!
      R_0 W^i\right) \! - \! \frac{R_{00}}{2\F}y^i,
  \end{displaymath}
  where $R_{ij}$ and $S_{ij}$ denote the components of the tensors
  \begin{displaymath}
    \mathcal R(X, Y)=\tfrac{1}{2}\left(h(\nabla^R_X W, Y) \! + \! h(X,
      \nabla^R_Y W)\right),  \quad 
    \mathcal S(X,Y)=\tfrac{1}{2}\left(h(\nabla^R_X W, Y) \! - \! h(X,
      \nabla^R_Y W)\right), 
  \end{displaymath}
  respectively. For any $(0,2)$ tensor $\mathcal T$ with tensor components
  $T_{ij}$ we use the notation
  \begin{subequations}
    \label{eq:components}
    \begin{equation}
      T_j=W^i T_{ij}, \quad T=W^j T_j, \quad  T^i=h^{ij}T_j,
      \quad
      T^i_j=h^{il}T_{lj},
    \end{equation}
    \begin{equation}
      T_0=y^i T_i, \quad T_0^i=y^jT_j^i, \quad T_{00}=y^i y^j T_{ij}.
    \end{equation}
  \end{subequations}
  The coefficients \eqref{eq:G_k} of the natural spray can be rewritten as
  \begin{equation}
    \label{alteq:G_k}
    G^i=\frac{1}{2}y^j y^k A_{kj}^i-\frac{\F}{2}\left(R_0^i+S_0^i\right).
  \end{equation}
  From \eqref{eq:concircular_W} we obtain
  \begin{equation}
    \label{eq:R_1}
    R_{ij}=\varphi h_{ij}, \quad 
    R_{00}=\varphi h^2, \quad R^i_0=\varphi y^i, \quad  S_{ij}=0.    
  \end{equation}
  Therefore, applying \cite[Formula 2.4]{Cheng_Shen_2009} with
  $c=-\varphi/2$, we obtain
  \begin{equation}
    \tilde{G}^i=\frac{1}{2}y^j y^k A_{kj}^i-\frac{\F}{2} \varphi y^i
    =\frac{1}{2}y^j y^k
    A_{kj}^i-\frac{\F}{2}R_0^i\stackrel{\eqref{alteq:G_k}}{=}G^i.
  \end{equation}
  Thus, the spray coefficients of $S$ and $\tilde{S}$ coincide.
  
  Conversely, suppose that the natural spray $S$ and the geodesic spray
  $\tilde S$ of the Randers metric coincide. Then
  \begin{equation}
    \label{eq:G_G}
    G^i=\tilde{G},
  \end{equation}
  and consequently, 
  \begin{equation}
    \label{eq:difference}
    G^i(v)-G^i (-v)=\tilde{G}^i(v)-\tilde{G}^i(-v).
  \end{equation}
  Since the quadratic terms in the variable $v$ clearly cancel, the left
  side of \eqref{eq:difference} becomes
  \begin{equation}
    \label{eq:left_side}
    G^i(v)-G^i (-v)=-\frac{R_0^i+S_0^i}{2}(v)\left(\F(v)+\F(-v) \right)
    =-\alpha(v)\left(R_0^i+S_0^i\right)(v).
  \end{equation}
  Similarly, the right side of \eqref{eq:difference} is
  \begin{equation}
    \label{eq:right_side}
    \tilde{G}^i(v) \! - \! \tilde{G}^i(-v)= A \bigl(\F^2(v)  \! -  \!
    \F^2(-v)\bigr) 
    \! - \! B  \left(\F(v)  \! +  \! \F(-v)\right)
    \! - \! C  \left(\frac{1}{\F(v)}  \! +  \! \frac{1}{\F(-v)}\right) \! ,  
  \end{equation}
  where
  \begin{enumerate}[label=(\roman*)]
  \item $A=\frac{1}{2}\left(S^i+R^i-R W^i\right)$ does not depend on $y$,
    \vspace{3pt}
  \item $B=S_0^i+\frac{1}{2} Ry^i + R_0 W^i$ is linear in the variable $y$,
    \vspace{3pt}
  \item $C=\frac{1}{2} R_{00}y^i$ is a cubic term in the variable $y$.
  \end{enumerate}
  Since
  \begin{displaymath}
    \F^2(v) \! - \! \F^2 (-v) \! = \! 4 \alpha(v)\beta(v), \quad \F(v) \! +
    \! \F(-v) \! = \! 2\alpha(v), 
    \quad 
    \frac{1}{\F(v)} \! + \! \frac{1}{\F(-v)} \! = \!
    \frac{2\alpha(v)}{\alpha^2(v)  \! -  \!  \beta^2(v)},
  \end{displaymath}
  dividing \eqref{eq:difference} by the common factor $\alpha(v)$, and
  comparing the remaining terms by using \eqref{eq:left_side} and
  \eqref{eq:right_side} we can obtain that
  \begin{math}
    \displaystyle \frac{R_{00}}{\alpha^2-\beta^2}y^i
  \end{math}
  must be linear in the directional variable. It follows that the function
  \begin{equation}
    \label{eq:m}
    m:=\frac{R_{00}}{\alpha^2-\beta^2}
  \end{equation}
  does not depend on the directional variable. Hence
  \begin{equation}
    \label{eq:R_2}
    R_{00}=\varphi h^2,  \qquad  R_{ij}=\varphi  h_{ij},
  \end{equation}
  for some function $\varphi \in C^\infty(M)$. Applying \cite[Formula
  2.4]{Cheng_Shen_2009} with $c=-\varphi/2$, the coefficients of the
  Randers spray $\tilde{S}$ reduce to 
  \begin{equation}
    \label{eq:tilde_G_reduced}
    \tilde{G}^i=\frac{1}{2}y^j y^k A_{kj}^i-\F S^i_0-\frac{1}{2}\F^2 S^i
    - \frac{1}{2}\varphi {\F}y^i.
  \end{equation}
  Using \eqref{eq:G_G} together with \eqref{alteq:G_k} and
  \eqref{eq:tilde_G_reduced}, we obtain
  \begin{equation}
    \frac{1}{2}\left(R_0^i+S_0^i\right)= S^i_0 + \frac{1}{2}\F S^i +
    \frac{1}{2} \varphi \, y^i.
  \end{equation}
  Since $\F $ is the only nonlinear term in this equation, it follows that
  $S^i=0$, and therefore
  \begin{equation}
     \varphi \, y^i= R_0^i-S_0^i.
  \end{equation}
  Differentiating with respect to $y^j$ yields
  \begin{equation}
    \varphi \delta_j^i= R_j^i-S_j^i  \ \ \Rightarrow \ \ \varphi h_{ij}=
    R_{ij}-S_{ij}.
  \end{equation}
  This implies \eqref{eq:concircular_W}, and thus $W$ is a concircular
  vector field with potential function $\varphi$.
\end{proof}

Comparing Proposition \ref{prop:S_S_rieman_proj_rel} and Proposition
\ref{prop:S_S_randers} we obtain the following result.

\begin{corollary}
  The geodesics of the natural spray and those of the Randers metric $\F$
  associated with the navigation data $(h,W)$ coincide if and only if they
  are projectively equivalent to the geodesics of the Riemann metric $h$.
\end{corollary}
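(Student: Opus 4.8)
The plan is to obtain the statement as a direct consequence of the two preceding propositions, both of which reduce their respective geometric condition to the very same algebraic one, namely the concircularity of $W$ in the sense of \eqref{eq:concircular_W}. First I would recall the terminology fixed in the proof of Proposition \ref{prop:S_S_rieman_proj_rel}: two sprays are projectively related precisely when their paths coincide as point sets up to reparametrization. Thus the phrase ``the geodesics are projectively equivalent to the geodesics of $h$'' means exactly that the spray in question is projectively related to the Riemannian spray $S^R$ attached to $\nr$. With this dictionary in place, the corollary is simply the composition of the two equivalences already proved.

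For the ``only if'' direction I would argue as follows. Assume the natural spray $S$ and the geodesic spray of the Randers metric $\F$ coincide. By Proposition \ref{prop:S_S_randers} this is equivalent to $W$ being a concircular vector field with respect to $h$. Feeding this into Proposition \ref{prop:S_S_rieman_proj_rel}, concircularity of $W$ guarantees that $S$ is projectively related to $S^R$. Since under the hypothesis $S$ is identified with the Randers geodesic spray, both sets of geodesics are literally the same, and they are projectively equivalent to the geodesics of $h$.

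The ``if'' direction runs along the same chain in reverse. If the common geodesic structure is projectively equivalent to that of $h$, then $S$ is projectively related to $S^R$, so Proposition \ref{prop:S_S_rieman_proj_rel} yields that $W$ is concircular; Proposition \ref{prop:S_S_randers} then gives that the natural spray and the geodesic spray of $\F$ coincide. In short, both sides of the asserted equivalence are characterized by the single condition $\nabla^R_X W=\varphi\,X$, and the corollary follows by transitivity.

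The content here is entirely the bookkeeping of combining the two equivalences, so there is no genuine analytic obstacle; the only point deserving care is the referent of ``they'' in the statement. I would make explicit that once $S$ equals the Randers geodesic spray the phrase applies simultaneously to the natural and the Randers geodesics, so that the projective equivalence to the Riemannian geodesics is asserted for both at once — this is automatic because the two sprays are equal under the hypothesis of the forward implication, and the projective relatedness of $S$ to $S^R$ supplied by Proposition \ref{prop:S_S_rieman_proj_rel} then transfers verbatim to the Randers geodesics.
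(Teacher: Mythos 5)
Your proposal is correct and is exactly the paper's argument: the paper derives this corollary simply by ``comparing'' Propositions \ref{prop:S_S_rieman_proj_rel} and \ref{prop:S_S_randers}, i.e.\ by chaining the two equivalences through the common condition that $W$ is concircular, which is precisely your bookkeeping. Your added remark on the referent of ``they'' (that in the backward direction one only needs the projective relation between the natural spray $S$ and $S^R$ to invoke Proposition \ref{prop:S_S_rieman_proj_rel}) is a harmless clarification of a point the paper leaves implicit.
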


\begin{remark}
  \label{rem:isotropic_S_curv}
  According to \cite{Cheng_Shen_2009}, a Randers metric has isotropic
  $S$-curvature if and only if
  \begin{math}
    R_{00}= \varphi h^2
  \end{math}
  with some function $\varphi \in C^\infty(M)$. It follows the following
\end{remark}

\begin{corollary}
  If the natural spray and the geodesic spray of the Randers metric
  associated to a navigation data coincide, then the Randers metric has
  isotropic $S$-curvature.
\end{corollary}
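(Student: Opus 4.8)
The plan is to chain together the two results that immediately precede the statement: the characterization in Proposition \ref{prop:S_S_randers} and the curvature criterion recalled in Remark \ref{rem:isotropic_S_curv}. First I would invoke Proposition \ref{prop:S_S_randers}: the hypothesis that the natural spray coincides with the geodesic spray of the Randers metric $\F$ is \emph{exactly} equivalent to $W$ being a concircular vector field with respect to $h$. Hence there is a potential function $\varphi \in C^\infty(M)$ satisfying $\nr_X W = \varphi\, X$ for every $X \in \X{M}$, which is the defining relation \eqref{eq:concircular_W}.

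Next I would translate this concircularity into a statement about the symmetric tensor $\mathcal R$ introduced in the proof of Proposition \ref{prop:S_S_randers}, namely $\mathcal R(X,Y)=\tfrac12\bigl(h(\nr_X W,Y)+h(X,\nr_Y W)\bigr)$. Substituting $\nr_X W=\varphi\,X$ gives at once $R_{ij}=\varphi h_{ij}$, exactly as already recorded in \eqref{eq:R_1}. Contracting twice with the directional variable, using the index convention \eqref{eq:components}, then yields $R_{00}=y^iy^j R_{ij}=\varphi\, h_{ij}y^iy^j=\varphi h^2$, so the tensor $\mathcal R$ produced by the coinciding-spray hypothesis is of precisely the required isotropic form.

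Finally I would apply the criterion of Remark \ref{rem:isotropic_S_curv}: following \cite{Cheng_Shen_2009}, a Randers metric has isotropic $S$-curvature if and only if $R_{00}=\varphi h^2$ for some $\varphi\in C^\infty(M)$. Since the preceding step produced exactly such a $\varphi$, the conclusion follows, completing the one-directional implication stated in the corollary. (The implication is genuinely one-way: isotropic $S$-curvature only constrains the double contraction $R_{00}$, whereas concircularity forces the stronger $R_{ij}=\varphi h_{ij}$ together with $S_{ij}=0$, so there is no reason to expect the converse.)

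Since every ingredient is already established in the excerpt, I do not anticipate any real obstacle; the proof is essentially a one-line corollary of Proposition \ref{prop:S_S_randers} combined with the cited characterization. The only point requiring care is bookkeeping the index conventions of \eqref{eq:components}, in order to confirm that the double contraction $R_{00}=y^iy^jR_{ij}$ does reduce $\varphi h_{ij}y^iy^j$ to precisely the quantity denoted $\varphi h^2$ in Remark \ref{rem:isotropic_S_curv}.
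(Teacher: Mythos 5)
Your proposal is correct and follows essentially the same route as the paper: the paper's proof simply cites the intermediate equation \eqref{eq:R_2} established in the proof of Proposition \ref{prop:S_S_randers} and then invokes Remark \ref{rem:isotropic_S_curv}, while you reach the same equation $R_{00}=\varphi h^2$ by passing through the proposition's statement (concircularity, $\nr_X W=\varphi X$) and recomputing $R_{ij}=\varphi h_{ij}$ — a negligible difference. Your closing observation about why the implication is only one-way is a correct and worthwhile remark, though not needed for the proof.
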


\begin{proof}
  As shown in the proof of Proposition \ref{prop:S_S_randers}, if the
  natural and the Randers sprays associated to a navigation data coincide,
  then \eqref{eq:R_2} holds. By Remark \ref{rem:isotropic_S_curv}, this
  implies that the Randers metric has isotropic $S$-curvature.
  
\end{proof}

\bigskip

\section{A note on metric correction processes}
\label{sec:6}

The purpose of a metric correction process is to adapt a connection to a
metric environment. The problem is closely related to the history of
Finsler geometry, the first challenge of which was the development of a
suitable concept of connection. Many attempts to solve the problem have
been made and are being made in certain special cases up to this day (see,
for example, the theory of generalized Berwald spaces). In the words of
M. Matsumoto: \emph{``There is a most suitable Finsler connection for every
  geometrical formulation''}. In case of the parallel transport
\eqref{eq:natural_parallel} the process is based on the vector field $W$ as
one of the initial data of a navigation problem. Especially, the
construction \eqref{eq:natural_parallel} can be applied to any metric
linear connection of the Riemannian metric $h$. Therefore the correction
changes metric linear connections given by the initial data of a navigation
problem to Finslerian non-linear metric connections, where the Finslerian
metric function is of Randers type derived from the navigation
problem. Results that are independent of the torsion remain valid. The
following example shows a different metric correction process in a more
general case. The detailed explanation of such a translate-and-normalize
process can be found in \cite{KozmaBaran} using the general context of
Finsler vector bundles.

\begin{example} Let $F$ be a Finsler metric function and consider a linear connection $\nabla^0$ on the base manifold. The metric correction process
\begin{equation}
    \label{eq:Finlser_parallel_gen}
    \P^c\colon T_pM\to T_qM, \quad \P^c(V_p):= \frac{F(V_p)}{F(\P^{0}(V_p))}\P^0(V_p),
  \end{equation}
  where $\P^0$ is the parallel translation along $c$ with respect to
  $\nabla^0$, gives a homogeneous (non-linear) parallel translation between
  the tangent spaces keeping the Finslerian metric function invariant. Let
  $V_t:=\P^c_t(V_p)$ be a parallel vector field and consider its derivative
  at $t=0$ to give the induced horizontal distribution in terms of the
  connection coefficients:
  \begin{displaymath}
    \frac{d V_t^k}{dt}=F(V_p) \left(-\frac{1}{F^2(\P^0_t (V_p))}\frac{d
        F(\P^0_t (V_p))}{dt}[\P_t^0(V_p)]^k
      +\frac{1}{F(\P^0_t (V_p))}\frac{d [\P^0_t (V_p)]^k}{dt}\right),
  \end{displaymath}
  where
  \begin{alignat*}{1}
    \frac{d F(\P^0_t (V_p))}{dt}
    &=\frac{\partial F}{\partial x^l} \circ \P^0_t (V_p)
      \left(x^l\circ \P^0_t   (V_p)\right)' +\frac{\partial F}
      {\partial y^l}\circ \P^0_t (V_p)\left(y^l\circ \P^0_t (V_p)\right)'
    \\
    & =\dot{c}^l(t)\frac{\partial F}{\partial x^l}\circ \P^0_t (V_p)
      -\dot{c}^{i}(t)V_t^j A_{ij}^l(c(t))\frac{\partial F}{\partial y^l}
      \circ \P^0_t  (V_p)=\dot{c}^i(t) \left(\delta_i^0F\right)
      \circ \P^0_t (V_p),
  \end{alignat*}
  where the functions $A_{ij}^k$ are the Christoffel symbols of
  $\nabla^0$ and 
  \begin{displaymath}
    \delta_i^0=\frac{\partial}{\partial x^i}-V_t^j A_{ij}^l(c(t))
    \frac{\partial}{\partial y^l}.
  \end{displaymath}
  Therefore
  \begin{displaymath}
    \frac{d V_t^k}{dt} \bigg |_{t=0}=-\dot{c}^i(0)
    \left(\frac{\delta_i^0 F}{F}(V_p)V^k(p)+V^j(p) A^k_{ij}(c(0))\right)
  \end{displaymath}
  and, consequently, the horizontal distributions are related as
  \begin{displaymath}
    \Gamma_i^k=\frac{\delta_i^0 F}{F} y^k+y^j A^k_{ij} \ \  \Rightarrow \ \
    \mathfrak{h}=\mathfrak{h}^0-\frac{d_{\mathfrak{h}^0}F}{F}\otimes
    \mathcal C.
  \end{displaymath}
  The associated sprays
  \begin{displaymath}
    S=S^0-\frac{S^0F}{F} \C 
  \end{displaymath}
  are projectively equivalent. 
\end{example}

\medskip

\textbf{Acknowledgements:} The authors would like to thank the reviewers
for their constructive comments, which helped to improve the article.

\medskip

\end{document}